\documentclass[11pt]{amsart}

\def\flnm{hl}
\def\version{7}
\def\datum{June 2012}
% choices:
\let\iflabels\iffalse
  % labels in margin
\let\ifcitenumber\iftrue
  % numbered references to literature
\let\iffilename\iffalse
  % file name and date at bottom of pages
\let\iftxfnts\iftrue
  % nice font (especially for printer)
  % if I make \iftxfnts \iffalse then the redefinition of \setminus
  % below gives problems that I do not understand
\let\ifscrltx\iffalse
  % interaction xdvi and editor

\iflabels
\usepackage[notcite,notref]{showkeys}
% in preliminary versions; gives labels in margin
\fi

\ifscrltx
\usepackage{srcltx}
% interaction between xdvi and editor
\fi

\usepackage[all]{xy}

\iftxfnts
\usepackage[varg]{txfonts}
% gives nicer result when printing
\fi

%%%%%%%%%%%%%%% Theorems etc

\newtheorem{thm}{Theorem}[section]
\newtheorem{prop}[thm]{Proposition}

\newtheorem{lem}[thm]{Lemma}

\theoremstyle{definition}
\newtheorem{defn}[thm]{Definition}

%%%%%%%%%%%%%%% Greek letters

\newcommand\al{\alpha}
\newcommand\Bt{{\mathsf{B}}}

\newcommand\Gm{\Gamma}
\newcommand\gm{\gamma}
\newcommand\Dt{\Delta}
\newcommand\e{\varepsilon}
\newcommand\z{\zeta}
\newcommand\ld{\lambda}
\renewcommand\th{\vartheta}
\newcommand\x{\xi}
\newcommand\s{\sigma}
\newcommand\ph{\varphi}
\newcommand\ps{\psi}
\newcommand\Om{\Omega}
\newcommand\om{\omega}

%%%%%%%%%%%%% blackboard boldface
\newcommand\CC{\mathbb{C}}
\newcommand\RR{\mathbb{R}}
\newcommand\ZZ{\mathbb{Z}}

%%%%%%%%%%%%%%% boldface
\newcommand\cc{{\mathbf c}}
\newcommand\EE{\mathbf{E}}

%%%%%%%%%%%%%%% clalligraphic letters
\newcommand\Pt{{\mathcal P}}
\newcommand\PtY{{\mathcal P}_{\!Y}}

%%%%%%%%%%%%%%%% abbreviations
 
\renewcommand\={\;=\;}
\let\setminus\smallsetminus
\newcommand\be{\begin{equation}}
\newcommand\ee{\end{equation}}
\newcommand\bad{\be\begin{aligned}}
\newcommand\ead{\end{aligned}\ee}
\newcommand\badl[1]{\be\label{#1}\begin{aligned}}
\newcommand\eadl{\end{aligned}\ee}

%%%%%%%%%%%%%%%% general notations

\newcommand\re{\mathrm{Re}\,}
\newcommand\im{\mathrm{Im}\,}
\newcommand\SL{{\mathrm{SL}}}
\newcommand\PSL{{\mathrm{PSL}}}

\newcommand\uhp{{\mathfrak{H}}}
\newcommand\oh{{\mathrm{O}}}
\newcommand\Gf{\Gamma}
\newcommand\mer{{!!}}
\newcommand\ra{{\mathrm{an}}}
\newcommand\mf{\mathfrak{M}}
\newcommand\Four[1]{{\mathcal{F}_{\!#1}}}
\newcommand\four[1]{{\mathrm{F}_{\!#1}}}
\newcommand\sign{\mathrm{Sign}}
\newcommand\hypg[2]{{}_{#1}\!F_{\!#2}}
\newcommand\ha{\mathsf{H}}
\newcommand\co{\mathsf{C}}
\newcommand\mo{\mathsf{M}}
\newcommand\Gcom{\Gm_{\!\mathrm{com}}}

%%%%%%%%%%%%%%%%% matrices %%%%%%%%%%%%%%%%%%%%%%%%%%%%%%%%%%%%%5
\makeatletter
%in amsmath  \atop  is redefined, \@@atop is left
\newcommand\hmatc[4]{\left[ {#1\@@atop #3}{#2\@@atop #4}\right]}
\newcommand\hmatr[4]{\left[ {\hfill #1\@@atop\hfill #3}{\hfill
#2\@@atop\hfill #4}\right]}
\newcommand\matc[4]{\left( {#1\@@atop #3}{#2\@@atop #4}\right)}
\newcommand\matr[4]{\left( {\hfill #1\@@atop\hfill #3}{\hfill
#2\@@atop\hfill #4}\right)} \makeatother

\iffilename
% file name on bottom of page
\makeatletter\def\@oddfoot{\rm{\footnotesize{File name
\tt\flnm\version.tex}\quad\today\hfil\thepage}}
\let\@evenfoot\@oddfoot \addtolength{\textheight}{-.4cm}
\addtolength{\footskip}{.4cm}
\makeatother
\fi

\numberwithin{equation}{section}

\begin{document}

\title{Harmonic lifts of modular forms}

\author{Roelof Bruggeman}
\address{Mathematisch Instituut Universiteit Utrecht, Postbus 80010,
NL-3508 TA Utrecht, Nederland}
\email{r.w.bruggeman@uu.nl}

\date{\datum}

\begin{abstract}It is shown that each complex conjugate of a
meromorphic modular form for $\mathrm{SL}_2(\mathbb{Z})$ of any
complex weight $p$ occurs as the image of a harmonic modular form
under the operator $2i y^p \, \partial_{\bar z}$. These harmonic
lifts occur in holomorphic families with the weight as the parameter.
\end{abstract}
\subjclass{ 11F37
% Forms of half-integer weight; nonholomorphic modular forms
11F72 % Spectral theory; Selberg trace formula
}
\maketitle

\section{Introduction} In the theory of mock modular forms, see \S3
of~\cite{BrFu} and also \S5 of~\cite{Za09}, one meets the exact
sequence
\be\label{es} 0 \longrightarrow M_p^! \longrightarrow H_p^! \stackrel
{\x_p} \longrightarrow \bar M_{2-p}^! \;, \ee
and gives conditions under which the last map is surjective. Here
$M_p^!$ denotes the space of holomorphic modular forms of weight~$p$
with at most exponential growth at the cusps
(also called the space of weakly holomorphic modular forms), and
$H_p^!$ denotes the corresponding space of $p$-harmonic modular
forms, defined by replacing the condition of holomorphy by the
condition of $p$-harmonicity, which means being in the kernel of the
operator
\be\label{Deltap} \Dt_p \= - 4 \,(\im z)^2 \,\partial_z\partial_{\bar
z}
+ 2 i p \,(\im z)\, \partial_{\bar
z}\,. \ee
The operator $\x_p = 2 i \,(\im z)^p\, \partial_{\bar z}$ maps $H_p^!$
into the space $\bar M_{2-p}^!$ of antiholomorphic modular forms of
weight~$2-p$ with at most exponential growth at the cusps. The
elements of $\bar M_{2-p}^!$ are complex conjugates of elements of an
appropriate space $M_{2-p}^!$. (In \S\ref{sect-mf} we will give a
more precise discussion of these spaces of modular forms.)

A \emph{$p$-harmonic lift} of an element $F$ in $\bar M^!_{2-p}$ is an
element $H$ of $H^!_p$ such that $\x_p H = F$. The concept stems from
the study of mock modular forms. Zwegers started in \cite{Zw02} with
mock theta functions $M$, which are holomorphic functions on the
upper half-plane given by a $q$-series, and added a simpler but
non-holomorphic function $C$ to it such that $M+C$ has modular
transformation behavior. The function $M+C$ is no longer holomorphic,
but $p$-harmonic for some weight~$p$. Applying the operator $\x_p$ to
$C$, or to $M+C$, gives an antiholomorphic cusp form of weight $2-p$,
from which $C$ can be reconstructed. Conversely, we may ask for a
given antiholomorphic automorphic form $F$ of weight~$2-p$ whether it
occurs as the image under $\x_p$ of a $p$-harmonic form~$H$.

Poincar\'e series form a convenient tool to construct harmonic lifts.
See Theorem~1.1 in the paper \cite{BO} of Bringmann and Ono, or \S6
in \cite{BOR} by Bruinier, Ono and Rhoades. If the parameters of the
Poincar\'e series are in the domain of absolute convergence this
gives a description of harmonic lifts by absolutely convergent
series. For other values of the parameters one has to use the
meromorphic continuation of the Poincar\'e series. An alternative
approach is the use of Hodge theory. See Corollary~3.8 in~\cite{BrFu}
of Bruinier and Funke. The method of holomorphic projection can be
used to construct harmonic lifts. See \S3 and \S5 in the
preprint~\cite{BKZ} of Bringmann, Kane and Zwegers.
\smallskip

My purpose in this paper is to show that the approach with Poincar\'e
series can be modified to work for arbitrary complex weights. I will
use results from perturbation theory of automorphic forms as
investigated in~\cite{Br94}. To avoid complications I consider only
the full modular group.
\begin{thm}\label{thmA}Let $F$ be an antiholomorphic modular form on
$\SL_2(\ZZ)$ of weight $2-p\in \CC$ with multiplier system $v$ on
$\SL_2(\ZZ)$ suitable for the weight~$p$, and assume that $F$ has at
most exponential growth at the cusps. Then there exists a
$p$-harmonic modular form $H$ on $\SL_2(\ZZ)$ of weight $p$ with the
same multiplier system $v$ and at most exponential growth at the
cusps, such that $\x_p H = F$.
\end{thm}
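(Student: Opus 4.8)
The plan is to produce $H$ as a finite linear combination of Poincaré series of weight~$p$, obtained by averaging suitable $p$-harmonic seed functions over $\Gm_\infty\backslash\SL_2(\ZZ)$, and to make sense of these series for \emph{every} complex~$p$ by continuing them meromorphically in the weight, using the perturbation theory of automorphic forms of~\cite{Br94}; the resulting lifts vary holomorphically with~$p$.

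First I would isolate the building blocks. At the cusp $\infty$ one has an expansion $F(z)=\sum_n a_n\,e^{-2\pi i(n+\kappa)\bar z}$, with $\kappa$ fixed by~$v$; the growth hypothesis means that only finitely many terms with $n+\kappa<0$ occur, and the remainder is an antiholomorphic cusp form of weight~$2-p$. Granting the description of the spaces $\bar M^!_{2-p}$ worked out in~\S\ref{sect-mf}, $F$ is then a \emph{finite} linear combination of standard antiholomorphic Poincaré series $\bar P_{2-p,n}$ — cuspidal ones for the terms with $n+\kappa>0$, and series carrying the prescribed singular parts for the finitely many~$n$ with $n+\kappa<0$. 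By linearity it therefore suffices to construct, for each relevant~$n$, a $p$-harmonic modular form $H_n$ of weight~$p$ with multiplier~$v$ and at most exponential growth such that $\x_pH_n$ is a nonzero multiple of $\bar P_{2-p,n}$.

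For the construction of $H_n$ I would introduce a spectral parameter~$s$ and choose a seed $\om_n(\cdot;s)$ on $\uhp$ that transforms under $\Gm_\infty$ with the multiplier~$v$, is a $\Dt_p$-eigenfunction whose eigenvalue $\mu(s,p)$ vanishes along the harmonicity locus $s=s_0(p)$, is built from the Whittaker $M$-function (so that near the boundary it behaves like a single power of $\im z$ and the averaged series converges), has the right growth at the cusp (decaying for $n+\kappa>0$, carrying the prescribed singular part for $n+\kappa<0$), and satisfies $\x_p\om_n(\cdot;s_0(p))=c_n\,e^{-2\pi i(n+\kappa)\bar z}$ with $c_n\neq0$. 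Set $P_n(z;s,p)=\sum_{\gm\in\Gm_\infty\backslash\SL_2(\ZZ)}\bigl(\om_n(\cdot;s)\,\big|_{v,p}\,\gm\bigr)(z)$, which converges absolutely for $\re s$ large. Because $\x_p$ intertwines the weight-$p$ and weight-$(2-p)$ slash actions for the multiplier~$v$ in the conventions of~\S\ref{sect-mf}, applying it term by term gives $\x_pP_n(z;s,p)=\sum_\gm\bigl(\x_p\om_n(\cdot;s)\bigr)\big|_{v,2-p}\gm$, a Poincaré series of weight~$2-p$ that specialises at $s=s_0(p)$ to $c_n\,\bar P_{2-p,n}$. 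Hence, at every weight~$p$ for which $P_n(\cdot;s,p)$ is regular in~$s$ at $s=s_0(p)$, the value $P_n(\cdot;s_0(p),p)$ is a $p$-harmonic modular form of the required kind with $\x_p$-image $c_n\bar P_{2-p,n}$, and summing these with the coefficients of~$F$ settles the theorem for all such~$p$, in particular for $\re p$ large, where the series already converge.

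It remains to reach the rest of the $p$-plane, and this is where I expect the real difficulty to lie. The tool is~\cite{Br94}: the family $(s,p)\mapsto P_n(z;s,p)$ has a meromorphic continuation whose polar set is an analytic divisor, so that $p\mapsto P_n(z;s_0(p),p)$ is a holomorphic family of $p$-harmonic lifts outside a discrete exceptional set of weights. At such a weight $p_*$ the naive specialisation diverges; one must instead pass to the limit, replacing it by the relevant Laurent coefficient of the family in the spectral parameter at the fixed weight $p_*$. This coefficient is still a modular form of weight $p_*$ with the correct multiplier and growth whose $\x_{p_*}$-image is $c_n\bar P_{2-p_*,n}$, but it may fail to be $p_*$-harmonic by a holomorphic cusp form: the residue is square-integrable and $p_*$-harmonic, hence holomorphic, hence annihilated by $\x_{p_*}$, and removing its contribution requires a finer analysis of the singularity of the family near $p_*$ — again supplied by the perturbation theory of~\cite{Br94}. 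That analysis of the exceptional weights is the main obstacle; granting it, the corrected Laurent coefficient is the desired $p_*$-harmonic lift of $\bar P_{2-p_*,n}$, and assembling the finite combination dictated by the Fourier expansion of~$F$ produces~$H$.
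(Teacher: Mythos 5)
Your high-level strategy -- introduce a spectral parameter, use the two-variable meromorphic continuation of \cite{Br94}, specialise to the harmonicity locus -- is the right family of ideas, but the object you build it on does not exist in the case the theorem is actually about. For $p\notin\RR$ the multiplier system $v=v_r$ suitable for $p$ has $r\equiv p\bmod 2$, hence $\im r=\im p\neq 0$, and $v_r$ is \emph{not unitary}: up to bounded factors $v_r(\gm)^{\pm1}=e^{\pm\pi i r\Phi(\gm)/12}$ with $\Phi$ the Rademacher function, and since the Dedekind sum satisfies $12\,s(1,c)\sim c$, one gets $|v_r(\gm)^{\pm1}|\geq e^{\pi|\im r|c/12}$ (for one choice of sign) on matrices $\matc ab c1$ lying in infinitely many distinct cosets of $\Gm_\infty\backslash\Gm$. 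This exponential growth in $c$ beats the polynomial decay of the seed along the orbit, so $P_n(z;s,p)$ diverges for \emph{every} $s$; neither ``$\re s$ large'' nor ``$\re p$ large'' helps. Consequently the term-by-term computation of $\x_pP_n$, and the decomposition of $F$ into convergent series $\bar P_{2-p,n}$ (whose cuspidal completeness you also assert via what is implicitly Petersson's inner-product argument, again unavailable for non-unitary $v$), never get off the ground off the real weight line. This is precisely why the paper does \emph{not} use Poincar\'e series: it replaces your target decomposition by the structure theorem ($F=\bar\eta^{-2r}$ times the conjugate of a weakly holomorphic form of even weight and trivial multiplier, so $r\mapsto\bar\eta^{-2r}F$ is a holomorphic family involving no averaging), and replaces your $P_n(\cdot;s,p)$ by the family $e_\rho(r,s)$ of Theorem~9.4.1 of \cite{Br94}, produced from prescribed Fourier-coefficient data at truncation points via the resolvent of a holomorphic family of self-adjoint operators.

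The second gap is the one you flag yourself, the exceptional weights, and your proposed repair (``the residue is square-integrable and $p_*$-harmonic, hence holomorphic'') again smuggles in an inner-product argument that fails for non-unitary multipliers. The paper's resolution is the actual content of the proof: (i) a singularity of $e_\rho$ at real $r$ forces the eigenvalue estimate $\frac14-s^2\geq-\frac14(\ell+r)^2$ by self-adjointness of the operator family there, which after normalising $\ell\leq0$ is violated on $V_{0,N}\cap(-\infty,0)$; hence no singularity is carried along the whole line $s=\frac{\ell+r-1}2$ and the restriction exists and is identified with $f_r$; (ii) after dividing $\EE^+_{\ell+r-2}e_\rho$ by $(s-\frac{\ell+r-1}2)(s+\frac{\ell+r-1}2)$ and restricting, the leftover singularities in $r$ are isolated and are removed by subtracting meromorphic families of weakly holomorphic forms ($\eta$-powers times polynomials in $J$), which lie in $\ker\x_{\ell+r}$ and so do not disturb the lift; (iii) the branching of the incomplete Gamma factors then dictates the cut domains $\CC\setminus[12M,\infty)$. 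Without a convergent starting object and without steps (i)--(ii), the ``corrected Laurent coefficient'' you invoke is not justified.
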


This is a mere existence result. The construction of $H$ is based on
the resolvents of self-adjoint families of operators in Hilbert
spaces, and does not give the $p$-harmonic lift $H$
explicitly.\smallskip

Let us denote by respectively $M_p^!(v)$, $\bar M_p^!(v)$ and
$H^!_p(v)$ the spaces of respectively holomorphic, antiholomorphic
and harmonic modular forms, with at most exponential growth, weight
$p$, and multiplier system~$v$.

Holomorphic and antiholomorphic modular forms occur in families, for
instance the powers of the Dedekind eta-function $r\mapsto \eta^{2r}$
form a family holomorphic in the weight $r\in \CC$, with a multiplier
system that we denote by~$v_r$. We have $\eta^{2r}\in M^!_r(v_r)$,
and $\bar \eta^{2r}\in \bar M^!_r(v_{-r})$. All antiholomorphic
modular forms with at most exponential growth are of the form
$F \,\bar\eta^{-2r}$, where $r\in \CC$, and $F\in
\bar M_{2-\ell}^!(1)$ for some $\ell\in 2\ZZ$. Such a family
$r\mapsto F\,\bar\eta^{-2r}$ is a holomorphic family on~$\CC$. It
turns out that harmonic lifts also occur in families, which however
are not defined on all of $\CC$, due to a branching phenomenon. We
work with domains of the form
\be U_M \= \CC \setminus [12 M,\infty) \ee
with $M\in \ZZ$.
\begin{thm}\label{thmB}Let $F\in \bar M^!_{2-\ell}(1)$ with
$\ell\in 2\ZZ$. There is $\mu_F\in \ZZ$ such that for all integers
$M\geq \mu_F$  there are holomorphic families
$r\mapsto \ha_{M,r}$ on $U_M$ for which
 $\ha_{M,r}\in H^!_{\ell+r}(v_r)$ and $\x_{\ell+r} \ha_{M,r} = F\,
\bar\eta^{-2r}$ for all $r\in U_M$.
\end{thm}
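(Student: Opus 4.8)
The plan is to realize each $\ha_{M,r}$ as (the meromorphic continuation in a spectral parameter of) a Poincaré-type series, and then to feed the whole construction through the analytic perturbation theory of \cite{Br94} with the weight $\ell+r$ as the varying parameter. First I would fix $F\in\bar M^!_{2-\ell}(1)$ and write its expansion at the cusp $\infty$; since $F$ is antiholomorphic of weight $2-\ell$ with trivial multiplier system and at most exponential growth, $F(z)=\sum_{n\le n_0}a_n\,e^{-2\pi i n\bar z}$ for some $n_0\in\ZZ$ (with possibly negative $n$, contributing the exponential growth). Multiplying by $\bar\eta^{-2r}$ shifts the exponents by a term depending on $r$ and multiplies by the reciprocal multiplier, so $F\,\bar\eta^{-2r}$ lands in $\bar M^!_{2-\ell-(-r)}(v_r)=\bar M^!_{2-(\ell+r)}(v_r)$ — i.e. precisely the target space for which a $(\ell+r)$-harmonic lift is sought by Theorem \ref{thmA}. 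This reduces matters to producing such a lift \emph{holomorphically in $r$}, which Theorem \ref{thmA} alone does not guarantee.

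Next I would set $p=p(r)=\ell+r$ and, for each generating exponential $e^{-2\pi i n\bar z}$ appearing in $F$ (with $n$ ranging over a finite set $S\subset\ZZ$), build a harmonic Poincaré series $P_{n,p}(z,s)$ of weight $p$ attached to the cusp $\infty$: the seed is a product of $e^{2\pi i n z}$-type exponential with a Whittaker function $W$ in the variable $s$ chosen so that $\Dt_p$ (cf.\ \eqref{Deltap}) acts on it by the eigenvalue matching harmonicity at the specialization $s=s_p$, and so that $\x_p$ maps the seed to the desired antiholomorphic exponential. Averaging the seed over $\Gamma_\infty\backslash\SL_2(\ZZ)$ gives, for $\re s$ large, an absolutely convergent series; its meromorphic continuation in $s$ to $s=s_p$ — which is where the harmonicity and the prescribed $\x_p$-image are achieved — is exactly the content of the perturbation-theoretic machinery of \cite{Br94}, phrased there for families of self-adjoint operators and their resolvents. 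A suitable linear combination $\sum_{n\in S}a_n P_{n,p}(z,s_p)$, corrected by an element of $M^!_p(v_r)$ to absorb the antiholomorphic part of the constant term, is the candidate $\ha_{M,r}$.

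The heart of the argument, and the main obstacle, is the joint holomorphy in $(r,s)$ of the continued Poincaré series near $s=s_p$ together with the control of the \emph{poles}: the resolvent $(\Dt_p - \lambda)^{-1}$ as a function of $(p,\lambda)$ has singularities exactly along the "spectral" loci where $\lambda$ meets the (residual and continuous) spectrum, and the point $s=s_p$ at which we evaluate traces out, as $r$ varies, a curve in the $\lambda$-plane. Branching occurs where this curve crosses the continuous spectrum, and this is why the family cannot live on all of $\CC$ but only on a cut plane $U_M=\CC\setminus[12M,\infty)$; the integer $M$ measures how far one must push the cut to clear the spectral obstruction coming from $F$ (the bound $\mu_F$ records the largest relevant pole location, essentially governed by $n_0$ and $\ell$). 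Concretely I would: (i) verify that the continued series $P_{n,p}(z,s)$ is holomorphic in $(r,s)$ away from an explicit discrete set of hyperplanes, using the Fredholm-analytic framework of \cite{Br94}; (ii) show the specialization $s\mapsto s_{p(r)}$ avoids these hyperplanes for $r\in U_M$ once $M\ge\mu_F$, by a direct inequality on $12M$ versus the spectral parameter $s_p^2 = (\tfrac{p}{2})(1-\tfrac{p}{2})$-type quantity; (iii) check that residues of the apparent poles at $s=s_p$ cancel after forming the $F$-linear combination, so $\ha_{M,r}$ is genuinely holomorphic, not merely meromorphic, on $U_M$; (iv) confirm $\x_{\ell+r}\ha_{M,r}=F\,\bar\eta^{-2r}$ by computing the $\x_p$-image term by term on the absolutely convergent region and propagating by analytic continuation, and that $\ha_{M,r}\in H^!_{\ell+r}(v_r)$ by the eigenvalue choice plus a growth estimate at the cusp. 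Steps (i) and (ii) are where the real work lies; the rest is bookkeeping with Whittaker functions and the $\eta^{2r}$ multiplier system~$v_r$.
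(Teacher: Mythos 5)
Your overall strategy --- embed the problem in a two-parameter family (weight $r$ and spectral parameter $s$), invoke the perturbation theory of \cite{Br94}, and specialize at the harmonic point $s=s_p$ --- is indeed the skeleton of the paper's argument, and your reduction to lifting $F\,\bar\eta^{-2r}$ under $\x_{\ell+r}$ is correct. (The paper deliberately does \emph{not} use the continued Poincar\'e series themselves but the intermediate families $e_\rho$ of Theorem~9.4.1 of \cite{Br94}, with prescribed Fourier data at truncation points, which sidesteps most of your step (i).) But there are genuine gaps. The most serious: the perturbation-theoretic construction is \emph{local}, producing a family only on a disk $V_{0,N}$ around $r=0$ whose radius grows with the truncation parameter $N$. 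Nothing in your proposal gets you from such a disk to all of $U_M$. The paper does this by normalizing the Fourier expansions so that $H_{N+1,r}-H_{N,r}$ is an explicit multiple of a weakly holomorphic family $j_{\ell,N,r}$, with coefficient $\bigl(4\pi(N-\tfrac r{12})\bigr)^{\ell+r-1}\,\Gf\bigl(1-\ell-r,4\pi(N-\tfrac r{12})\bigr)$, and then gluing over $N$. This also shows that your explanation of the cut is wrong: $[12M,\infty)$ is not where the specialization curve meets the continuous spectrum (that locus is governed by $\ell+r=1$, not by multiples of $12$); it is the branch cut of $w\mapsto w^{\ell+r-1}\Gf(1-\ell-r,w)$ at $w=4\pi(N-\tfrac r{12})=0$, i.e.\ the degeneration of the exponentially decaying Whittaker solution in the Fourier term of order $-N+\tfrac r{12}$ when the real part of that order vanishes, which happens exactly at $r=12N$.

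Two further gaps. After restricting to $s=s_p(r)$ the family is still only \emph{meromorphic in $r$}; your step (iii) concerns residues in $s$ and does not touch this. The paper removes the $r$-singularities by hand: at a pole $r_0$ of order $k$ the leading Laurent coefficient is annihilated by $\x_{\ell+r_0}$, hence is a meromorphic modular form $q(r_0)$, and one subtracts the family $(r-r_0)^{-k}\,\eta^{2(r-r_0)}\,q(r_0)$; the normalized Fourier expansion is what guarantees only finitely many such poles occur. Finally, you must rule out that the two-variable family carries a singularity along the \emph{entire} line $s=s_p(r)$, in which case no restriction exists at all; the paper does this with the eigenvalue bound $\frac14-s^2\geq-\frac14(\ell+r)^2$ at real $r$, coming from self-adjointness, after first arranging $\ell\leq 0$ by multiplying $F$ by a suitable power of $\bar\Dt$. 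None of these steps is bookkeeping; together they are where the content of Theorem~\ref{thmB} lies.
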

This result implies Theorem~\ref{thmA}.\medskip

Meromorphic modular forms may have singularities in points of the
upper half-plane~$\uhp$. The space $M^\mer_p(v_r)$ of meromorphic
modular forms of weight~$p$ with the multiplier system $v_r$ is
contained in the space $H^\mer_p(v_r)$ of harmonic functions $F$ on
$\uhp\setminus S$ that are invariant under the action of $\SL_2(\ZZ)$
of weight $p$ with the multiplier system $v_r$, where $S\subset\uhp$
consists of finitely many $\Gm$-orbits and where $F$ satisfies near
each $\z\in  S$ an estimate $F(z)
= \oh\Bigl( \bigl( \frac{z-\z}{z-\bar\z} \bigr)^{-a}\Bigr)$ as
$z\rightarrow \z$ for some $a>0$. The space $\bar M^\mer_p(v_r)$
consists of the complex conjugates of the functions in
$M^\mer_{\bar p}(v_{-\bar r})$.

\begin{thm}\label{thmC}Let $p,r\in \CC$ with $p\equiv r\bmod 2$. For
each $F\in \bar M^\mer_{2-p}(v_r)$ there exists a harmonic lift
$H\in H^\mer_p(v_r)$ such that $\x_p H = F$.
\end{thm}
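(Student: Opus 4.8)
The plan is to reduce Theorem~\ref{thmC} about meromorphic modular forms to the already-established Theorem~\ref{thmA} (equivalently Theorem~\ref{thmB}) about forms with at most exponential growth at the cusps, by ``absorbing'' the singularities in $\uhp$ into an explicit correction term. First I would clear the multiplier system and reduce to the case of integral, even weight: writing $p = \ell + r$ and using the eta-power trick, an element $F\in\bar M^\mer_{2-p}(v_r)$ can be written as $F_0\,\bar\eta^{-2r}$ with $F_0\in \bar M^\mer_{2-\ell}(1)$, $\ell\in 2\ZZ$, since dividing by $\bar\eta^{2r}\in \bar M^!_r(v_{-r})$ changes the weight and multiplier system in the prescribed way and only affects the behaviour at the cusp, not in $\uhp$. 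Thus it suffices to produce a harmonic lift for meromorphic forms of weight $2-\ell$ with trivial multiplier system, and then multiply the resulting lift by $\bar\eta^{-2r}$.

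Next I would handle the singularities. Let $S\subset\uhp$ be the finite union of $\Gm$-orbits where $F$ (hence $F_0$) has its poles. The key observation is that $\x_p$ applied to a suitable elementary non-holomorphic kernel produces an antiholomorphic form with prescribed principal part at a point $\z\in\uhp$. Concretely, for the full modular group one has Poincar\'e-type series (or, locally, the function $z\mapsto \bigl(\frac{z-\z}{z-\bar\z}\bigr)^{n}(\im z)^{1-p}$ and its $\Gm$-average) whose image under $\x_p$ is antiholomorphic of weight $2-p$ with a pole of the required order at $\z$; one such building block and its $\Gm$-translates have at most exponential (in fact polynomial) growth at the cusps. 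Subtracting from $F$ a finite linear combination $G$ of these building blocks, chosen so that $F-G$ has no poles in $\uhp$, leaves an element of $\bar M^!_{2-\ell}(1)$ — that is, an antiholomorphic modular form with singularities only at the cusps and at most exponential growth there. I would need to check that the Poincar\'e-series construction converges (or can be meromorphically continued) for the relevant weights; for the full modular group and with the polar parameter in $\uhp$ this is classical and causes no branching, so the weight dependence is holomorphic throughout $\CC$.

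Now apply Theorem~\ref{thmA} (or Theorem~\ref{thmB}) to the cusp-singular part $F-G\in\bar M^!_{2-\ell}(1)$: there is $H_1\in H^!_{\ell}(v_0)=H^!_\ell(1)$ with $\x_\ell H_1 = F - G$. Let $H_2$ be the explicit harmonic preimage of $G$ coming from the building blocks above, so $\x_\ell H_2 = G$ and $H_2\in H^\mer_\ell(1)$ with singularities exactly on $S$ of the allowed type $\oh\bigl(\bigl(\frac{z-\z}{z-\bar\z}\bigr)^{-a}\bigr)$. Then $H_0 := H_1 + H_2 \in H^\mer_\ell(1)$ satisfies $\x_\ell H_0 = F_0$, and $H := H_0\,\bar\eta^{-2r}\in H^\mer_p(v_r)$ satisfies $\x_p H = F$, since multiplication by the antiholomorphic factor $\bar\eta^{-2r}$ commutes with $\partial_{\bar z}$ up to the weight shift built into the definition of $\x_p$, and $\bar\eta^{-2r}$ is nowhere zero on $\uhp$, so it introduces no new singularities. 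This proves Theorem~\ref{thmC}.

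The main obstacle I anticipate is the construction and analytic control of the local building blocks: one must exhibit, for each pole order $n\geq 1$ and each weight $p\in\CC$, an explicit $\Gm$-invariant (of weight $p$, multiplier system $v_0$) harmonic function on $\uhp\setminus\Gm\z$ whose image under $\x_p$ is antiholomorphic with a pole of order exactly $n$ at $\z$, with the correct growth at the cusps, and holomorphic dependence on $p$. For generic $\z\in\uhp$ the relevant Poincar\'e series over $\Gm_\z\backslash\Gm$ converges for $\re p$ large and extends holomorphically in $p$ by a standard argument (the contribution of the fixed point in $\uhp$ avoids the branching seen in Theorem~\ref{thmB}, which came from the cusp), but checking the precise order of the pole and the singularity estimate $\oh\bigl(\bigl(\frac{z-\z}{z-\bar\z}\bigr)^{-a}\bigr)$ requires some care with the local expansion near $\z$. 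The rest of the argument is bookkeeping with the eta-power twist and the linearity of $\x_p$.
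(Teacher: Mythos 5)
Your reduction breaks at its final step, and the failure is not a technicality: it is precisely the difficulty the paper is built to overcome. You factor $F=F_0\,\bar\eta^{-2r}$ with $F_0\in\bar M^\mer_{2-\ell}(1)$, produce an $\ell$-harmonic lift $H_0$ of $F_0$, and then claim $H:=H_0\,\bar\eta^{-2r}$ is an $(\ell+r)$-harmonic lift of $F$ because ``multiplication by $\bar\eta^{-2r}$ commutes with $\partial_{\bar z}$ up to the weight shift built into $\x_p$''. It does not. Since $H_0$ is not antiholomorphic, $\partial_{\bar z}(H_0\bar\eta^{-2r})=\bar\eta^{-2r}\,\partial_{\bar z}H_0+H_0\,\partial_{\bar z}\bar\eta^{-2r}$ has a nonvanishing second term, and even the first term yields $y^{r}\,\bar\eta^{-2r}\,\x_\ell H_0$ rather than $\bar\eta^{-2r}\,\x_\ell H_0$ after multiplying by $2iy^{\ell+r}$ --- a spurious factor $y^r$ plus a non-antiholomorphic remainder. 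Moreover $H_0\bar\eta^{-2r}$ transforms with automorphy factor $(cz+d)^{-\ell}(c\bar z+d)^{-r}$, not $(cz+d)^{-\ell-r}$, so it is not even modular of weight $\ell+r$ for the holomorphic action, and a direct computation shows $\Dt_{\ell+r}\bigl(H_0\bar\eta^{-2r}\bigr)\neq0$ in general. Twisting by $\eta$-powers moves holomorphic and antiholomorphic forms between weights, but it does not move harmonic lifts; if it did, Theorems~\ref{thmA} and~\ref{thmB} would follow trivially from the classical even-integral-weight theory and the branch cut $[12M,\infty)$ in Theorem~\ref{thmB} could not occur. This is why the paper never reduces the weight to an integer: the weight $\ell+r$ is carried through the whole construction.

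The other half of your plan --- subtracting an explicit polar correction $G$ supported on $S$ so that $F-G$ falls under Theorem~\ref{thmA} --- is reasonable in outline but also rests on an unsupported step. For non-real $r$ the multiplier system $v_r$ is non-unitary ($|v_r(\gm)|$ is unbounded on $\Gm$), so the elliptic Poincar\'e series you invoke converges absolutely for no weight at all, and its continuation in the weight is exactly the kind of statement that needs the perturbation theory of \cite{Br94}; ``classical and causes no branching'' is not available. If instead you build the blocks only at even integral weight with trivial multiplier system, you are thrown back on the broken $\bar\eta^{-2r}$-twist. The paper's actual route treats the interior singularities inside the same construction as the cusp: the growth condition $\cc_N$ contains pairs $(\z,\nu)$ for $\z\in\PtY$, Theorem~\ref{thm-fam-e} prescribes the corresponding terms of the expansions at the points of $\Pt$ simultaneously, and the resulting family already lies in $H^S_{\ell+r}(r)$; Theorem~\ref{thmC} is then read off from Theorem~\ref{thm-main} with no separate subtraction of principal parts.
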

This lifting can also be done in holomorphic families. Theorems
\ref{thmB} and~\ref{thmC} follow from the more general
Theorem~\ref{thm-main} in \S\ref{sect-ext}.\medskip

To obtain these results we start in \S\ref{sect-mf} with a more
precise discussion of the spaces of holomorphic, antiholomorphic and
harmonic modular forms. Section~\ref{sect-ramf} reformulates the
equation $\x_{\ell+r}H = \bar \eta^{-2r}F$ in terms of the more
general class of real-analytic modular forms. In this way we can
embed the family $r\mapsto \bar\eta^{-2r}F$ in a family with two
parameters, the weight and a ``spectral parameter''. This makes it
possible to use analytic perturbation theory to arrive at meromorphic
families $r\mapsto H_{N,r}$ of modular solutions of the equation
$\x_{\ell+r} H_{N,r} = \bar\eta^{-2r}F$. Section~\ref{sect-hfhf}
removes the singularities of these families, and leads to
Theorem~\ref{thm-main}, from which Theorems \ref{thmA}--\ref{thmC}
follow.

Section~\ref{sect-norm} gives a normalization that determines the
families of harmonic lifts uniquely. That does not mean that we
obtain them explicitly. The theorems in this paper are existence
results only. It is far from obvious how to write $h_{N,r}$ as the
sum of a ``mock modular form'' and a ``harmonic correction'',
especially if $h_{N,r}$ has singularities in the upper half-plane.
See~\S\ref{sect-mock}.

Subsection~\ref{generalization} discusses the possibilities and
difficulties of extension to other discrete groups. Finally,
Section~\ref{sect-etap} discusses, as an example, a lift of
$r\mapsto \bar\eta^{-2r}$, and states an explicit formula for the
first derivative of this lift at $r=0$.\bigskip

I thank Kathrin Bringmann and Ben Kane for several discussions on the
subject of this paper during several visits to Cologne. During the
symposium \emph{ Modular Forms, Mock Theta Functions, and
Applications} at Cologne in 2012, Soon-Yi Kang, \'Arp\'{a}d T\'{o}th
and Sander Zwegers discussed in their lectures methods to obtain
harmonic lifts. Several aspects of this paper are related to work in
progress with YoungJu Choie and Nikos Diamantis. I profited from
comments of Kathrin Bringmann, Jan Bruinier and Jens Funke on an
earlier version of this paper.

\section{Modular forms}\label{sect-mf}
This section serves to define the concepts more precisely than in the
introduction. The discrete group is $\Gm:=\SL_2(\ZZ)$.

\subsection{Holomorphic modular forms}\label{sect-holmf}
The Dedekind eta-function
\[ \eta(z) \= e^{\pi i z/12}\prod_{n\geq 1}\bigl(1-\nobreak e^{2\pi i
n z}\bigr)\]
has no zeros in the upper half-plane
$\uhp= \bigl\{z=x+iy\in \CC\;:\; y>0\bigr\}$. One chooses a branch of
its logarithm
\be \label{logeta}
\log\eta(z) \= \frac{\pi i z}{12} - \sum_{n\geq 1}\s_{-1}(n) \,
q^n\,,\ee
with $q=e^{2\pi i z}$ and $\s_u(n) = \sum_{d\mid n}d^u$, and then
defines $\eta^{2r}(z) \= e^{2 r \log\eta(z)}$. The transformation
behavior of $\log\eta$ is studied by R.\,Dedekind in the
appendix~\cite{Ded} to the collected works of B.\,Riemann. One may
also consult Chap.~IX in~\cite{La}. This leads to the modular
transformation behavior
\be \eta^{2r}(\gm z) = v_r(\gm)\, (cz+d)^r \, \eta(z)\qquad\text{for
all }\gm=\matc abcd\in \Gm\,,\ee
with the \emph{multiplier system}~$v_r$. A multiplier system suitable
for the weight $p\in \CC$ is a map $v:\Gm\rightarrow\CC^\ast$ such
that
\be\label{holact} \Bigl( F|_{v,r}\matc abcd \Bigr)\,(z)
\= v\matc abcd^{-1}\,
(cz+d)^{-p}\, F\Bigl( \frac{az+b}{cz+d}\Bigr)
\ee
defines a representation of $\Gm/\{\pm I\} = \PSL_2(\ZZ)$ in the
functions on~$\uhp$. We use the convention of computing
complex powers of $cz+d$ with $\arg(cz+\nobreak d) \in (-\pi,\pi]$.

For the modular group all multiplier systems occur in one family
$r\mapsto v_r$ with parameter $r\in \CC\bmod12\ZZ$. The multiplier
system $v_r$ is suitable for weights $p\equiv r\bmod 2$. It is
determined on the two standard generators of $\SL_2(\ZZ)$ by
\be v_r \matc 11{}1 \= e^{\pi i r/12}\,, \quad v_r\matr{}{-1}1{} \=
e^{-\pi i r/2}\,.\ee

\begin{defn}\label{def-M}Let $p,r\in \CC$, $p\equiv r\bmod 2$. The
space $M_p^!(r) = M_p^!\bigl(\Gm,v_r)$ consists of the holomorphic
functions $F$ on~$\uhp$ that satisfy $F|_{v_r,p}\gm = F$ for all
$\gm\in \Gm$ and
\be\label{expgr} F(z) \= \oh\bigl( e^{ay} \bigr) \quad\text{ as
}y\rightarrow \infty \text{ for some }A>0, \ee
uniformly for $x$ in compact sets. By $M^\mer_p(r)$ we denote the
space of meromorphic modular forms of weight $p$ with multiplier
system $v_r$.
\end{defn}
Here and in the sequel we use the standard convention $x=\re z$ and
$y=\im z$ for $z\in \uhp$. If $p$ and $r$ are not real we cannot
impose in \eqref{expgr} uniformity in~$x\in \RR$. The condition
\eqref{expgr} is the condition of \emph{exponential growth}.

Since $\eta$ has no zeros in $\uhp$, multiplication by $\eta^{2r_1}$
gives a bijection between $M^!_p(r) $ and
$M^!_{p+r_1}(r+\nobreak r_1)$. This implies that all spaces in
Definition~\ref{def-M} can be uniquely described as $M_{\ell+r}^!(r)$
with $r \in \CC$ and $\ell \in L:=\{0,4,6,8,10,14\}$. The general
form of an element of $M_{\ell+r}$ is $p(J) \, E_\ell \, \eta^{2r}$,
where $p(J)$ is a polynomial in the elliptic invariant
$J\in M_0^!(0)$, and $E_\ell$ is the holomorphic Eisenstein series in
weight $\ell\in L\setminus\{0\}$, and where we put $E_0=1$. The
general form of an element of $M^\mer_{\ell+r}(r)$ is also
$p(J)\, E_\ell\, \eta^{2r}$, where now $p(J)$ is a rational function
in~$J$. (See, {\sl e.g.}, \S4.1 of~\cite{Mi}.)

We can formulate the meromorphy of $F$ at $\z\in \uhp$ by holomorphy
in~$z$ on a pointed neighborhood of~$\z$ in~$\uhp$ and the growth
condition
\be\label{growth-int}
F(z) \= \oh\Bigl( \bigl( (z-\z)/(z-\bar\z) \bigr)^{-a}\Bigr)\qquad
\text{ as $z\rightarrow\z$, \ for some $a>0$}\,. \ee

\subsection{Harmonic modular forms and antiholomorphic modular
forms}\label{sect-hahmf}

\begin{defn}We say that a function $F$ is \emph{$p$-harmonic} on some
subset of~$\uhp$ if $\Dt_p F=0$ on that subset, where $\Dt_p$ is the
operator given in~\eqref{Deltap}.
\end{defn}
The holomorphic action $|_{v_r,p}$ of $\Gm$ in~\eqref{holact}
preserves $p$-harmonicity and commutes with $\Dt_p$. Holomorphic
functions are $p$-harmonic for each $p\in\CC$.

Antiholomorphy is not preserved by the action $|_{v_r,q}$ of $\Gm$,
but by the following action:
\begin{defn}For $p\equiv -r\bmod 2$ the antiholomorphic action
$|_{v_r,p}^a$ of $\Gm$ in the functions on~$\uhp$ is given by
\be \Bigl(F|_{v_r,p}^a \matc abcd\Bigr)(z) \= v_r \matc abcd^{-1}
(c\bar z+d)^{-p}\, F\Bigl( \frac {az+b}{cz+d}\Bigr)\qquad\text{ for }
\matc abcd\in \Gm\,, \ee
where powers of $c\bar z+d$ are computed with
$-\pi\leq \arg(c\bar z+\nobreak d)<\pi$.
\end{defn}

The operator
\be \x_p \= 2i y^p\,\partial_{\bar z} \ee
vanishes precisely on the holomorphic functions, and a function $F$ is
$p$-harmonic if and only if $\x_p F $ is antiholomorphic. This
operator $\x_p$ intertwines holomorphic and antiholomorphic actions:
\be \x_p\Bigl( F|_{v_r,p}\gm\Bigr) \= \bigl( \x_p
F\bigr)|^a_{v_r,2-p} \gm\,. \ee\medskip

Let us define $M^S_p(r)$ as the space of $F\in M^\mer_p(r)$ with
singularities contained in the set $S$. Then
$M^!_p(r) = M^\emptyset_p(r)$ and $M^\mer_p(r) = \bigcup_S M^S_p(r)$
where $S$ runs over the collection of unions of finitely many
$\Gm$-orbits in~$\uhp$. This suggests the following definition.

\begin{defn}\label{MHSdef}Let $S\subset \uhp$ consist of finitely many
(possibly zero)
$\Gm$-orbits in~$\uhp$. Let $p\equiv r\bmod 2$. We define $H^S_p(r)$
as the space of $r$-harmonic functions on $\uhp\setminus S$ that are
invariant under the action $|_{p,r}$ of~$\Gm$, satisfy the condition
\eqref{expgr} of exponential growth at the cusp and the growth
condition \eqref{growth-int} at the points $\z\in S$.

The space $\bar M^S_{-p}(r)$ consists of the antiholomorphic functions
on~$\uhp\setminus S$ that are invariant under the action
$|_{-p,v_r}^a$ of~$\Gm$ and satisfy \eqref{expgr}, and condition
\eqref{growth-int} for all $\z\in S$.

We put $H^!_p(r)=H^\emptyset_p(r)$, $\bar M_{-p}^!(r)= \bar
M^\emptyset_{-p}(r)$, $H^\mer_p(r) = \bigcup_S H^S_p(r)$, and $\bar
M^\mer_{-p}(r)= \bigcup_S \bar M^S_{-p}(r)$, where $S$ runs over the
collection of unions of finitely many $\Gm$-orbits in~$\uhp$.
\end{defn}
\[ \renewcommand\arraystretch{1.2}
\begin{array}{|l|c|c|c|}\hline
&M_p^S(r) & H_p^S(r) & \bar M^S_{-p}(r)\\ \hline
\text{growth at $\infty$:}& \multicolumn{3}{|c|}{\text{condition
\eqref{expgr} is satisfied}}\\
\text{near $\z\in S$:}&\multicolumn{3}{|c|}{\text{condition
\eqref{growth-int} is satisfied}}\\
\text{for all $\gm\in \Gm$:}& \multicolumn{2}{|c|}{\text{invariant
under $|_{v_r,p}\gm$}}& \text{invariant under $|_{v_r,-p}\gm$}\\
\text{on $\uhp\setminus S$:}
&\text{holomorphic}&\text{$p$-harmonic}&\text{antiholomorphic}\\
\hline
\end{array}
\]

For each $S$ consisting of finitely many $\Gm$-orbits in~$\uhp$ we
have an exact sequence
\be\label{es0}0 \rightarrow M_p^S(r) \rightarrow H^S_p(r)
\stackrel{\x_p}\rightarrow \bar M^S_{2-p}(r)\,.\ee
This is not immediately clear. The question is whether the operator
$\x_p$ preserves the growth conditions \eqref{expgr}
and~\eqref{growth-int}. This can be shown by looking at the growth of
the terms in the expansions in the next subsection, and the effect of
the operator~$\x_p$. It is a special case of an analogous result for
Maass forms, that we will mention in the next section. Near the end
of \S\ref{sect-exp-grc} we will derive the statement from
Proposition~4.5.3 in~\cite{Br94}.

The central question in this paper is whether
$\x_p H^S_p(r) \rightarrow \bar M_{2-p}^S(r)$ is surjective.

\subsection{Expansions} We fix a union $S$ of finitely many
$\Gm$-orbits in $\uhp$. Let $\Pt$ be the set consisting of $\infty$
and of representatives of the $\Gm$-orbits in~$S$, for instance 
representatives in the standard fundamental domain.

A meromorphic modular form $F\in M^S_p(r)$ has a Fourier expansion at
$\infty$ of the form
\be F(z) \= \sum_{\nu\geq \mu} a_\mu \, q^{\nu+r/12}\,,\ee
with $q^\al = e^{2\pi i \al z}$. The integer $\mu$ may  be
negative. If $F$ has singularities at points of~$S$ then this
expansion converges only on a region $y>A$ not intersecting~$S$.

Near each $\z\in \Pt\cap \uhp$ the function has an expansion of the form
\be\label{hol-exp} F(z) \= (z-\bar \z)^{-p}\, \sum_{\nu\geq \mu} a_\nu
\, w^\nu\,, \ee
with $w=\frac{z-\z}{z-\bar\z}$. If $F$ has a singularity at~$\z$ then
$\mu<0$. If $\z\in \Gm i$ then $a_\nu=0$ if
$\nu \not\equiv \frac{p-r}2\bmod 2$, and for $\z\in \Gm e^{\pi i /3}$
there is a similar condition modulo~$3$. The expansion at $\z$ will
represent $F$ only on some open hyperbolic disk around~$\z$ that does
not contain other singularities.

An antiholomorphic modular form $F\in \bar M^S_{-p}(r)$ has similar
expansions:
\badl{ahol-exp} &\text{near }\infty:& F(z) &\= \sum_{\nu\geq \mu}
a_\nu \bar q^{\nu+r/12}\,,\\
&\text{near }\z:& F(z) &\= (\bar z-\z)^{p} \sum_{\nu\geq \mu} a_\nu\,
\bar w^{\nu}\,. \eadl

A $p$-harmonic modular form $F\in H^S_p(r)$ has also expansions at
points of~$\Pt$, the terms of which inherit the $p$-parabolicity:
\begin{align*}
&\text{near }\infty: & F(z) &\= \sum_{\nu \in \ZZ} f_{\infty,\nu}(y)
\, e^{2\pi i(\nu+r/12)x}\,,\\
&\text{near }\z: & F(z) &\= (z-\bar \z)^{-p} \sum_{\nu \in \ZZ}
f_{\z,\nu}(|w|) \, \Bigl( \frac w{|w|}\Bigr)^\nu\,.
\end{align*}
The harmonicity induces second order differential equations for the
coefficients, which then are elements of a two-dimensional space,
with a one-dimensional subspace corresponding to holomorphic terms.
We note that the operator $\x_p$ sends the term with $(w/|w|)^\nu$ to
the term with $(w/|w|)^{\nu+1} = \bar w^{-\nu-1}\, |w|^{\nu+1}$ in
the expansion~\eqref{ahol-exp}, with $p$ replaced by $p-2$.

\section{Real-analytic modular forms}\label{sect-ramf}The task to find
a $p$-harmonic modular form $H$ such that $\x_p H=F$ for a given
antiholomorphic modular form becomes easier if we embed $F$ in a
family of modular forms of a more general type. For this purpose one
may use Poincar\'e series. Here we modify that approach in such a way
that it works for complex weights.

We recall the definition of Maass forms, which are real-analytic
modular forms that satisfy more general conditions than just
(anti)holomorphy or harmonicity. The surjectivity of
$\x_p:H^S_p(r)\rightarrow \bar M^S_{2-p}(r)$ can be reformulated in
terms of Maass forms. To this reformulated problem we will apply
results in~\cite{Br94} that lead to meromorphic families of lifts.

\subsection{Maass forms} We define a third action of
$\Gm/\{\pm I\}=\PSL_2(\ZZ)$ on the functions on $\uhp$:
\begin{defn}\label{Mfdef}For $p,r\in \CC$, $p\equiv r\bmod 2$ and
$\gm = \matc abcd\in \Gm$:
\[ \bigl(f|^\ra_{v_r,p}\gm\bigr)(z) \= v_r(\gm)^{-1}\,
e^{-ip\,\arg(cz+d)} \, f(\gm z)\,. \]
As before, $\arg(cz+d) \in (-\pi,\pi]$.
\end{defn}
This action is intermediate between the actions $|_{v_r,p}$ and
$|_{v_r,p}^a$, and does not favor either holomorphy or
antiholomorphy. Intertwining operators between these actions are
\badl{intertw}
(R_p^h F)(z) &\= y^{p/2} \, F(z)\,,&
\qquad (R_p^h F)|^\ra_{v_r,p}\gm &\= R_p^h\bigl(F|_{v_r,p}\gm\bigr)
&\quad&(\gm\in \Gm)\,,\\
(R_p^a F)(z) &\= y^{-p/2}\, F(z)\,,&
(R_p^a F)|^\ra_{v_r,p}\gm &\= R_p^a\bigl( F|_{v_r,-p}^a
\gm\bigr)&&(\gm\in \Gm)\,. \eadl
The action $|^\ra_{v_r,p}$ of $\Gm$ commutes with the \emph{Casimir
operator in weight~$p$}:
\be \om_p \= -y^2\partial_y^2-y^2\partial_x^2+i p y \partial_x\,.
\ee

We define Maass forms with singularities in a fixed set $S$, which is
a union of finitely many $\Gm$-orbits in~$\uhp$. We choose a system
of representatives $\PtY$ of $\Gm\backslash S$.
\begin{defn}\label{defMf}Let $p,r\in \CC$ with $p\equiv r\bmod 2$. A
\emph{modular Maass form} of weight~$p$ for the multiplier system
 $v_r$ with \emph{spectral parameter} $s$ is a function on
 $\uhp\setminus S$, such that
\begin{enumerate}
\item $f|_{v_r,p}^\ra \gm = f$ for all $\gm\in \Gm$,
\item $\om_p f = \bigl( \frac14-s^2\bigr)\, f$.
\end{enumerate}
By $\mf^S_p(r,s)=\mf^S_p(r,-s)$ we denote the space of such Maass
forms.
\end{defn}
This is a very large space. The definition does not impose growth
conditions. The definition is invariant under $s\mapsto -s$, and we
could work with the \emph{eigenvalue} $\frac14-s^2$. However, in
practice the spectral parameter $s$ is more convenient. As
parametrizations both $s\mapsto \frac14-s^2$ and
$s\mapsto s(1-\nobreak s)$ are in use. Here I choose
$s\mapsto \frac14-s^2$ for easy reference to~\cite{Br94}. In
\cite{Br94} Maass forms are considered as functions on the universal
covering group of $\SL_2(\RR)$. Here we stay on the upper half-plane,
and mention only that to $f\in \mf^S_p(r,s)$ corresponds the function
$p(z)k(\th) \mapsto f(z)\, e^{ip\th}$ in the notations of \S2.2
of~\cite{Br94}.

The operators
\be \label{Epm} \EE^+_p \= 2iy\,\partial_x + 2y\,\partial_y+p\,,\qquad
\EE^-_p \=
-2iy\,\partial_x+2y\,\partial_y-p\,,
\ee
satisfy the relations
\bad \EE^\pm_p \circ \om_p &\= \om_{p\pm 2}\circ \EE^\pm_p\,,\qquad
\EE^\pm_p\bigl( F|^\ra_{v_r,p}\gm \bigr) \= \bigl(\EE_p^\pm
F\bigr)|_{v_r,p\pm 2}^\ra \gm\,,\\
\EE^\pm_{p\mp2}\circ \EE^\mp_p &\= -4\,\om_p-p^2\pm 2p\,, \ead
and give linear maps
\be \EE^\pm_p : \mf^S_p(r,s) \longrightarrow \mf^S_{p\pm 2}(r,s)\,.
\ee

For general combinations of the weight $p$ and the spectral parameter
$s$ these weight shifting operators are bijections between spaces of
Maass forms. Those values of $(p,s)$ where this is not the case are
related to the spaces of modular forms discussed in \S\ref{sect-mf}.
The operators in~\eqref{intertw} lead to the following commuting
diagram:
\be\label{diagS}
\begin{aligned}
\xymatrix{ M_p^S(r)
\ar@{^{(}->}[d] \ar@{^{(}->}[r]^(.45){R^h_p}
& \mf^S_p\bigl(r,\frac{p-1}2\bigr)\ar@{=}[d]
\\
H^S_p(r)
\ar[d]_{\x_p} \ar@{^{(}->}[r]^(.45){R^h_p}
& \mf^S_p\bigl( r,\frac{p-1}2\bigr)
\ar[d]_{-\frac12\EE_p^-}
\\
\bar M^S_{2-p}(r)
\ar@{^{(}->}[r]^(.45){R^a_{p-2}}
&\mf^S_{p-2}\bigl( r,\frac{p-1}2\bigr)
}
\end{aligned}
\ee
The spaces on the right are much larger than those on the left, since
we imposed growth conditions in Definition~\ref{MHSdef} and did not
in Definition~\ref{Mfdef}.

\subsection{Expansions and growth conditions}\label{sect-exp-grc} Any
$f\in \mf^S_p(r,s)$ has a Fourier expansion on a neighborhood
$y>A_\infty$ of~$\infty$ for a suitable $A_\infty>0$, and at each
$\z\in \PtY$ a polar expansion on
$0<\bigl|\frac{z-\z}{z-\bar \z}\bigr|<A_\z$ for suitable $A_\z$. The
 individual terms of these expansions are also eigenfunctions of
$\om_q$ with eigenvalue $\frac14-s^2$. This leads to second order
differential equations, the solutions of which can be described in
special functions. Here we mention the results needed for this paper.
Section~4.2 in~\cite{Br94} gives more information.\medskip

In the Fourier expansion at~$\infty$
\be F(z) \= \sum_{n\equiv r/12\bmod 1} (\Four{\infty,n}f)(z)\,,\qquad
(\Four{\infty,n}f)(z) \= e^{2\pi i n x}\, (\four{\infty,n}f)(y)\,, \ee
the Fourier coefficients $\four {\infty,n}f$ satisfy a second order
differential equation, defining a two-dimensional space of solutions.
If $\re n\neq 0$ this space has a one-dimensional subspace of
elements with quick decay as $y\rightarrow\infty$. As a basis vector
of this subspace we use
\be \om_p(\infty;n,s;z) \= e^{2\pi i n x}\, W_{p\sign(\re n)/2,s}(4\pi
n \sign(\re n) y)\,. \ee
It satisfies
\badl{ominfs} \om_p\bigl(\infty;n,\pm\tfrac{p-1}2;z\bigr)
&\= (4\pi n)^{p/2}\, y^{p/2}\, q^n&&\text{if }\re n>0\,,\\
\om_p\bigl(\infty;n;\pm \tfrac{p+1}2;z\bigr)&\=
(-4\pi n)^{-p/2} y^{-p/2}\bar q^{-n}
&&\text{if }\re n<0\,. \eadl

The other elements in the space are asymptotic to a multiple of
$z\mapsto e^{2\pi i n
x}\, \allowbreak y^{-p\,\sign(\re n)/2}\, \allowbreak e^{2\pi n \sign(\re n)y}$
as $y\rightarrow \infty$. So these terms have exponential growth, of
larger order if $\re n$ gets larger. If $\re n=0$ all element of the
solution space have less than exponential growth.\smallskip

Near $\z\in \PtY$ we have an analogous situation. We have an expansion
\badl{zw} F(z) &\= \sum_{\nu\in \ZZ} (\Four{\z,\nu}f)(z)\,,\\
(\Four{\z,\nu}f)(z) &\= e^{ip\arg(1-w)}\, e^{i\nu\arg w}\,
(\four{\z,\nu}f)(u)\,, \eadl
with $w=(z-\z)/(z-\bar\z)$ and $u=\frac{|w|^2}{1-|w|^2} =
\frac{|z-\z|^2}{4y\, \im \z}$. {\footnotesize

To see that this is the right type of expansion, we compare it with
\eqref{hol-exp} and~\eqref{ahol-exp}, and use the operators
in~\eqref{intertw} to obtain:
\begin{align*}
R^h_p\Bigl( (z-\bar\z)^{-p}\, (w/|w|)^\nu \Bigr)&\= e^{ip\arg(1-w)}\,
e^{i\nu\arg w}\, e^{-\pi i p/2}\, 2^{-p}\, (\im\z)^{-p/2} \,
\bigl(1-|w|^2\bigr)^{p/2} \,,\\
R^a_p\Bigl( (\bar z-\z)^p\, (\bar w/|w|)^\nu \Bigr)&\=
e^{ip\arg(1-w)}\, e^{-i\nu\arg w}\, e^{-\pi i p/2}\,2^p\,
(\im\z)^{p/2}\, \bigl(1-|w|^2\bigr)^{-p/2}\,.
\end{align*}
Thus, we obtain Fourier terms of the form
$e^{ip \arg(1-w)\pm i \nu \arg w}$ times a function on~$u$.}
%end footnotesize

The Fourier coefficients $\four{\z,n}$ are elements of a
two-di\-men\-sional space, with a one-di\-men\-sional subspace
corresponding to functions without a singularity at~$\z$. This
subspace is spanned by
\bad\label{omz} \om_p(\z;p+2\nu,s;z) &\= e^{ip\arg(1-w)}\, e^{i\nu\arg
w}\, \Bigl( \frac u{u+1}\Bigr)^{\e\nu/2}\, (u+1)^{-s-1/2}\\
&\qquad\cdot
\hypg21\Bigl( \tfrac12+s+\tfrac{\e p}2+\e\nu,\tfrac12+s-\tfrac{\e
p}2;1+ \e\nu;\tfrac u{u+1}\Bigr)\,, \ead
with $\e\in \{1,-1\}$ chosen such that $\e\nu\geq 0$.
(I use the notations of \S4.2 in~\cite{Br94}. A confusing point is
that in \cite{Br94} it made good sense to parametrize the order of
the Fourier terms at $\z\in \PtY$ by $p+2\nu$, while here
parametrization by $\nu$ itself is more convenient.) We have:
\badl{omzts} \om_p\bigl(\z;&p+2\nu,\pm \tfrac{p-1}2;z\bigr)\\
 &\= 2^p \, e^{\pi i p/2}\,
(\im\z)^{p/2}\, y^{p/2}\,
(z-\bar\z)^{-p}\, w^\nu&& \text{if }\nu\geq 0\,,\\
\om_p\bigl( \z;&p+2\nu,\pm \tfrac{p+1}2;z\bigr)\\
&\= 2^{-p}\, e^{\pi i p/2}\, (\im \z)^{-p/2}\,y^{-p/2}(\bar
z-\z)^{p}\, \bar w^{-\nu}&& \text{if }\nu\leq 0\,. \eadl

The other elements that can occur in the term of order $n$ in the
expansion have a singularity at $\z$. This singularity is logarithmic
if $n=p$ and behaves near $w=0$ like $w^{(p-n)/2}$ if $n-p>0$ and
like $\bar w^{(n-p)/2}$ if $n-p<0$.

These results show that the growth of Maass forms can be controlled by
the Fourier expansion. Suppose that $f\in \mf^S_p(r,s)$ satisfies
$f(z) = \oh(e^{ay})$ as $y\rightarrow\infty$ for a given $a>0$. The
Fourier terms $\Four{\infty,n}f$ can be given by a Fourier integral,
and hence satisfy the same estimate. So if $|\re n |>a/2\pi$ then
$\Four{\infty,n}$ has to be a multiple of $\om_p(\infty;n,s)$.
Similarly, if $F$ satisfies near $\z$ the estimate in
\eqref{growth-int} for a certain $a>0$, then all but finitely many
terms in the expansion at $\z$ are multiples of $\om_p(\z;n,s)$.
Conversely, the contribution of the terms in the expansion at
$\x\in \Pt=\{\infty\}\cup\PtY$ that are a multiple of $\om_p(\x;n,s)$
cannot give a large growth.

\begin{defn}
A \emph{growth condition} $\cc$ for $\mf^S_p(r,s)$ is a finite set of
pairs $(\x,n)$ with $\x\in \Pt$ and $n\equiv \frac r{12}\bmod 1$ if
$\x=\infty$, and $n\in \ZZ$ if $\x\in\PtY$. If $\re r \in 12\ZZ$ we
require that $\cc$ contains $(\infty,it)$ for
$t=-i\,\frac{r-\re r}{12}$.\smallskip

Notation: $\cc(\x) = \{ n\;:\; (\x,n)\in \cc\}$.
\end{defn}
A growth condition singles out finitely many terms from the expansions
of Maass forms at points of~$\Pt$. The additional condition can be
understood from the fact that for $\re n=0$ there are no quickly
decreasing non-zero Fourier terms at~$\infty$.

\begin{defn}Let $\cc$ be a growth condition. By $\mf_p^\cc(r,s)$ we
denote the space of $f\in \mf^S_p(r,s)$ that satisfy
\begin{align*}
\Four{\infty,n}f&\;\in\; \CC\, \om_p(\infty;n,s)&&\text{if }n\equiv
\frac r{12}\bmod 1\text{ and }n\not\in \cc(\infty)\,,\\
\Four{\z,\nu}f&\;\in\; \CC\, \om_p(\z;p+2\nu,s)&&\text{if }\nu\in
\ZZ,\; \nu\not\in \cc(\z)\text{ for }\z\in \PtY\,.
\end{align*}
\end{defn}

The weight shifting operators $\EE_p^\pm$ in~\eqref{Epm} behave nicely
with respect to the Fourier expansion at~$\infty$:
\be \EE^\pm_p \Four{\infty,n}f \= \Four{\infty,n}\EE^\pm_p f\,. \ee
For $\z\in \PtY$ we have the more complicated relation
\be \EE^\pm_p \Four{\z,\nu} f \= \Four{\z,\nu\mp1} \EE_p^\pm f\,. \ee
We define for a given growth condition $\cc$ the growth conditions
$\cc^+$ and $\cc^-$ by
\be \cc^\pm(\infty) \= \cc(\infty)\quad\text{ and }\quad \cc^\pm(\z)
\= \bigl\{\nu \mp1\;:\; \nu\in \cc(\z)\bigr\}\quad\text{ if }\z\in
\PtY\,. \ee
The differentiation relations in Table~4.1 on p.~63 of~\cite{Br94}
imply that $\EE_p^\pm$ sends $\om_p(\x;\ast,s)$ to
$\om_{p\pm 2}(\x;\ast,s)$ for all $\x\in \PtY$. Hence
\be\label{EEcc} \EE^\pm_p : \mf_p^\cc(r,s) \rightarrow \mf_{p\pm
2}^{\cc^\pm}(r,s)\,. \ee
See Proposition~4.5.3 in~\cite{Br94}.
(The change in the growth condition is absent in~\cite{Br94}. This is
a consequence of the difference in the parametrization of the order
of terms in the expansions at points of~$\PtY$.)
\smallskip

If $F\in \mf^S_p(r,s)$ satisfies \eqref{expgr} at~$\infty$ and
\eqref{growth-int} at the points in~$\PtY$, then it is in
$\mf_p^\cc(r,s)$ for some growth condition~$\cc$, and conversely each
element of a given $\mf_p^\cc(r,s)$ satisfies those growth conditions
at the points of~$\Pt$. Thus, the diagram~\eqref{diagS} can be
replaced:
\be\label{diagS1}
\begin{aligned}
\xymatrix{ M_p^S(r)
\ar@{^{(}->}[d] \ar@{^{(}->}[r]^(.45){R^h_p}
& \bigcup_\cc \mf_p^\cc\bigl(r,\frac{p-1}2\bigr)\ar@{=}[d]
\\
H^S_p(r)
\ar[d]_{\x_p} \ar[r]^(.45){R^h_p}_(.45){\cong}
& \bigcup_\cc \mf_p^\cc\bigl( r,\frac{p-1}2\bigr)
\ar[d]_{-\frac12\EE_p^-}
\\
\bar M^S_{2-p}(r)
\ar@{^{(}->}[r]^(.45){R^a_{p-2}}
& \bigcup_\cc\mf_{p-2}^{\cc^-}\bigl( r,\frac{p-1}2\bigr)
}
\end{aligned}
\ee
Here $\cc$ runs over all growth conditions for $\mf^S_p(r,s)$.
Moreover,
\bad R^h_p M^S_p(r) &\= \ker\,\Bigl(
\EE^-_p:\bigcup_\cc\mf^\cc_p\bigl(r,\tfrac{p-1}2\bigr) \rightarrow
\bigcup_\cc\mf^{\cc^-}_{p-2}\bigl( r,\tfrac{p-1}2\bigr) \Bigr)\,,\\
R^a_{p-2}\bar M^S_{2-p}(r) &\=\ker\,\Bigl( \EE^+_{p-2}:
\bigcup_\cc\mf^{\cc}_{p-2}\bigl(r,\tfrac{p-1}2\bigr)\rightarrow
\bigcup_\cc\mf_p^{\cc^+}\bigl(r,\tfrac{p-1}2\bigr)\Bigr)\,. \ead
Relation~\eqref{EEcc} shows that the differential operators $\EE_p^+$
and $\EE_p^-$ transform Maass forms satisfying a given growth
condition into Maass forms satisfying a slightly changed growth
condition. So indeed $\x_p H^S_p(r) \subset \bar M^S_{2-p}(r)$.

In the next subsections we will not work with individual Maass forms,
but with families of Maass forms $(r,s) \mapsto f(r,s)$ for $(r,s)$
in some domain $\Om \subset \CC^2$, such that
$f(r,s) \in \mf^S_{\ell+r}(r,s)$ for a given $\ell\in 2\ZZ$. Then we
will use growth conditions $\cc=\{(\x,\nu_0)\}$ 
in which the integers $\nu_0$ determine
functions of $r$. All $\cc(\x)$ are finite subsets of $\ZZ$:
\badl{grcond-fam}\nu_0\in \cc(\infty)\,,\;&\text{ corresponds to }
 r\mapsto \nu_0+\frac r{12}\,,\\
\nu_0 \in \cc(\z)\,,\; \z\in \PtY\,,\;&\text{ corresponds to }
r\mapsto \nu_0\,. \eadl
The variable $r$ should run over a set $U\subset \CC$ such that
$\nu+\frac{\re r}{12}\neq 0$ for all $\nu \in \cc(\infty)$
and~$r\in U$. In this context we interprete $\four{\x,\nu}$ as
$\four{\z,\nu+r/12}$ if $\x=\infty$ and as $\four{\x,\nu}$ if
$\x\in \PtY$.

\subsection{Perturbation theory}The basis for our proof of
Theorems~\ref{thmA}--\ref{thmC} is Theorem 9.4.1 in~\cite{Br94}. It
gives meromorphic families of Maass forms with a prescribed behavior
of the terms in the expansions at points of~$\Pt$ given by a growth
condition. In~\cite{Br94} it is a step in obtaining the meromorphic
continuation of Poincar\'e series in $(r,s)$ jointly. In this paper
it is convenient to use this intermediate result and not the
continued Poincar\'e series.\medskip

For our given sets $S$ and $\Pt$ we have expansions of Maass forms on
regions $y>A_\infty$ near~$\infty$ and on regions
$0<\bigl|\frac{z-\z}{z-\bar\z}\bigr|<A_\z$ near $\z\in \PtY$. These
regions may overlap. To be able to apply the results in Chapters~7--9
of~\cite{Br94} we shrink these regions such that their images in the
quotient $\Gm\backslash\uhp$ are pairwise disjoint.

We choose for each $\x\in \Pt$ a \emph{truncation point} $a_\x$ such
that $a_\infty >A_\infty$ and
$a_\z\in \bigl(0,A_\z^2/(1-\nobreak A_\z^2) \bigr)$ for $\z\in \PtY$.
Hence $(\four{\infty,n}f)(a_\infty)$ and $(\four{\z,\nu}f)(a_\z)$ are
well defined. The precise choice of the truncation points does not
matter.

We use a sequence $(\cc_N)_{N\geq 1}$ of growth conditions as
in~\eqref{grcond-fam}, with
\be\label{grc-inf} \cc_N(\infty)\= \{\nu\in \ZZ\;:\; |\nu|<N\}\,,\ee
and with finite sets $\cc(\z)$ for $\z\in \PtY$ that do not depend
on~$N$. We formulate part of the statement of Theorem~9.4.1
in~\cite{Br94}:
\begin{thm}\label{thm-fam-e} There is an open disk
$V_{0,N}=V_0(\cc_N)$ around $0$ in $\CC$ such that for each
collection of holomorphic functions
$\rho=\bigl(\rho_{\x,\nu}\bigr)_{(\x,\nu)\in \cc_N}$ on
$V_{0,N}\times\CC$ there is a unique meromorphic family $e_\rho$ of
Maass forms on $V_{0,N}\times\CC$ with values in $\mf^{\cc_N}_\ell$
satisfying
\be\label{Fe-a} \four{\x,\nu} e_\rho(r,s;a_\x) \= \rho_{\x,\nu}(r,s)
\qquad\text{ for all }(\x,n)\in \cc_N\,.\ee
\end{thm}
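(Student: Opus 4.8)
This is part of Theorem~9.4.1 in~\cite{Br94}, and the plan is to reproduce its proof. The idea is to solve the eigenvalue equation $\om_{\ell+r}f=\bigl(\tfrac14-s^2\bigr)f$ on a truncation of $\Gm\backslash\uhp$ and then to undo the truncation with the resolvent of a holomorphic family of self-adjoint operators. First I would truncate: remove from $\Gm\backslash\uhp$ the neighbourhood $\{y>a_\infty\}$ of $\infty$ and each hyperbolic disk $\{u<a_\z\}$ around $\z\in\PtY$ in the coordinates of \S\ref{sect-exp-grc}, obtaining a compact manifold $Y$ with boundary. On $L^2(Y)$ for the invariant measure I would take the self-adjoint realisation $T(r)$ of the Casimir operator $\om_{\ell+r}$ acting by $|^\ra_{v_r,\ell+r}$, with domain pinned down by the boundary conditions that encode $\cc_N$: along $\partial Y$ the coefficient $\four{\x,n}$ with $n\notin\cc_N(\x)$ must continue as the quickly decreasing solution $\om_{\ell+r}(\x;n,s)$, while the finitely many coefficients with $n\in\cc_N(\x)$ are left free. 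The requirement that $\cc$ contain $(\infty,it)$ with $t=-i(r-\re r)/12$ when $\re r\in12\ZZ$ is exactly what makes the boundary problem well posed there: for $\re n=0$ there is no non-trivial quickly decreasing solution, so that mode must be left free. As $Y$ is compact, $T(r)$ has purely discrete spectrum; as $\om_{\ell+r}=\om_\ell+ir\,y\,\partial_x$ is a holomorphic family with first-order, hence relatively bounded, perturbation term and $v_r$ depends holomorphically on $r$, the map $r\mapsto T(r)$ is an analytic family of self-adjoint operators in the sense of Kato on a small disk $V_{0,N}$ about $0$. Hence the resolvent $R(r,s)=\bigl(T(r)-(\tfrac14-s^2)\bigr)^{-1}$ is a meromorphic family of bounded operators on $V_{0,N}\times\CC$, with polar set the locus where $\tfrac14-s^2$ is an eigenvalue of $T(r)$.

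Given the data, I would then build a comparison function. For each $(\x,\nu)\in\cc_N$ take a solution of the second-order ordinary differential equation satisfied by the coefficient $\four{\x,\nu}$, normalised to take the value $\rho_{\x,\nu}(r,s)$ at the truncation point $a_\x$, times the corresponding angular/exponential mode, times a cutoff equal to $1$ near $\x$ and supported in the excised collar. Summing over $(\x,\nu)\in\cc_N$ produces $E_\rho(r,s)$, holomorphic in $(r,s)$, which near each $\x$ is a genuine Maass form with the prescribed coefficients at $a_\x$, and for which $g_\rho(r,s):=\bigl(\om_{\ell+r}-(\tfrac14-s^2)\bigr)E_\rho(r,s)$ is smooth, supported in the compact collar, and holomorphic in $(r,s)$. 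Then $e_\rho:=E_\rho-R(r,s)\,g_\rho$ is annihilated by $\om_{\ell+r}-(\tfrac14-s^2)$, hence is a Maass form; it is meromorphic in $(r,s)$ because $E_\rho$ and $g_\rho$ are holomorphic and $R$ is meromorphic; and the correction $R(r,s)g_\rho$, lying in the domain of $T(r)$, contributes only $\om_{\ell+r}(\x;\cdot,s)$-type terms at the points of $\Pt$, so it does not disturb the values $\four{\x,\nu}(\,\cdot\,;a_\x)$ for $(\x,\nu)\in\cc_N$. Thus $e_\rho\in\mf^{\cc_N}_{\ell+r}(r,s)$ and $\four{\x,\nu}e_\rho(r,s;a_\x)=\rho_{\x,\nu}(r,s)$.

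For uniqueness, the difference $d(r,s)$ of two solutions is a Maass form in $\mf^{\cc_N}_{\ell+r}(r,s)$ all of whose coefficients $\four{\x,\nu}(\,\cdot\,;a_\x)$, $(\x,\nu)\in\cc_N$, vanish; together with the $\cc_N$-boundary conditions this puts $d(r,s)$ in the domain of $T(r)$ with $T(r)d=(\tfrac14-s^2)d$, so $d(r,s)=0$ off the proper closed analytic subset of $V_{0,N}\times\CC$ where $\tfrac14-s^2$ is an eigenvalue of $T(r)$; since both solutions are meromorphic families, $d\equiv0$. The genuinely delicate step — and where \cite{Br94} does the real work — is arranging the Hilbert space and boundary conditions so that $T(r)$ is self-adjoint with purely discrete spectrum and so that $r\mapsto T(r)$ is Kato-analytic jointly with the $s$-dependence of the boundary data; once that is in place, the construction and the meromorphy in $(r,s)$ follow as above.
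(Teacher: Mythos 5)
Your outline follows the same route the paper itself takes: the paper does not prove this statement at all, but quotes it as part of Theorem~9.4.1 of \cite{Br94}, and the sketch it gives of that theorem's proof --- a truncation-dependent self-adjoint family $A^a(r)$ generalizing Colin de Verdi\`ere's pseudo-Laplacian \cite{CdV}, analytic perturbation theory from Kato \cite{Ka}, and the meromorphic resolvent $R^a(r,s)$ used to construct $e_\rho$ --- is exactly the strategy you reconstruct. Your proposal is consistent with that description, so there is nothing to flag beyond the fact that the real work (setting up the Hilbert space and verifying self-adjointness and Kato-analyticity, as you yourself note) is delegated to \cite{Br94} in both cases.
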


A family $(r,s)\mapsto f(r,s)$ of Maass forms is holomorphic if it is
pointwise holomorphic and also all terms $\four{\x,n}f(r,s)$ in the
expansions at $\infty$ and~$\z$ are pointwise holomorphic. It has
values in $\mf^\cc_\ell$ if its value at $(r,s)$ is in
$\mf^\cc_{\ell+r}(r,s)$ for each $(r,s)$ in its domain.

A meromorphic family $f$ on $V_{0,N}\times\CC$ with values in
$\mf_\ell^{\cc_N}$ is not just a family that is pointwise meromorphic
on $\uhp\setminus S$. We require that locally on its domain the
family can be written as $\frac 1\ps \, h$, where $h$ is a
holomorphic family with values in $\mf_\ell^{\cc_N}$ and $\ps$ is a
non-zero holomorphic function. The idea is that ``denominators should
not depend on~$z$''.

We call $(r_0,s_0)$ a \emph{singularity} of the family if the family
is not a holomorphic family on a neighborhood of $(r_0,s_0)$. So
$\ps(r_0,s_0)$ should vanish for the representation
$f=\frac 1\ps\, f$ that is valid on a neighborhood of $(r_0,s_0)$.

It should be noted that although the functions $\rho_{\x,n}$ are
holomorphic, the families $(r,s)\mapsto \four{\x,\nu}e_\rho(r,s)$ are
meromorphic. Their singularities are not visible in the functions
$(r,s)\mapsto \four{\x,\nu}e_\rho(r,s;a_\x)$.

The theorem is based on the existence of a holomorphic family
$r\mapsto A^a(r)$ on a disk in~$\CC$ centered at~$0$ of self-adjoint
operators in a Hilbert space. The Hilbert space and the family depend
on the growth condition $\cc_N$ and the truncation points $a_\x$ for
$\x\in \Pt$. After preparations in earlier chapters it is defined (in
a more general context)
in \S9.2 of~\cite{Br94}. It is a generalization of the pseudo Laplace
operator of Colin de Verdi\`ere in~\cite{CdV}. The family $A^a$ can
be studied with the methods of analytic perturbation theory in Kato's
book~\cite{Ka}. Eigenvectors of $A^a(r)$ with eigenvalue
$\frac14-s^2$ correspond to Maass forms
$F\in \mf_{\ell+r}^{\cc_N}(r,s)$ for which $(\four{\x,n} f)(a_\x)=0$
for all $(\x,n)\in \cc_N(r)$.

The resolvent gives a meromorphic family $(r,s)\mapsto R^a(r,s)$ of
bounded operators. This resolvent is used in the construction of
$e_\rho$ in the theorem. We do not know much about this resolvent,
except that it is meromorphic, and we have some eigenvalue estimates
that give information on its singularities for $r\in \RR\cap V_0$.
This gives the following additional information:
\begin{lem}\label{lem-erho-ap}Let $e_\rho$ be as in
Theorem~\ref{thm-fam-e}. For each $r\in V_{0,N}$ the set of
$s\in \CC$ such that $e_\rho$ has a singularity at $(r,s)$ is
discrete in~$\CC$. If $(r,s)$ is a singularity of $e_\rho$ with
$r\in V_0\cap\RR$ then $\frac14-s^2
\geq -\frac14\,\bigl(\ell+\nobreak r)^2$.
\end{lem}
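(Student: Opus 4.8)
The plan is to extract the stated properties of the family $e_\rho$ from its construction via the resolvent $R^a(r,s)$ of the self-adjoint family $A^a(r)$ described just above the statement. First I would recall that the family $e_\rho$ is built, locally on $V_{0,N}\times\CC$, out of $R^a(r,s)$ applied to a holomorphic datum: in the notation of \S9.2--9.4 of~\cite{Br94}, $e_\rho(r,s)$ is a sum of an explicit holomorphic family of ``incomplete Poincar\'e series'' determined by $\rho$, together with a correction term of the form $R^a(r,s)$ applied to a holomorphic family in the Hilbert space. Since $A^a(r)$ is a holomorphic family of self-adjoint operators in the sense of Kato~\cite{Ka}, its resolvent $R^a(r,s)=\bigl(A^a(r)-(\tfrac14-s^2)\bigr)^{-1}$ is meromorphic in $(r,s)$, and for fixed $r\in V_{0,N}$ it is a meromorphic operator-valued function of $s$ whose poles lie at the points where $\tfrac14-s^2$ is an eigenvalue of $A^a(r)$. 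The singularities of $e_\rho$ at a given $r$ are contained in this pole set. Thus the first assertion reduces to: for fixed $r$, the spectrum of $A^a(r)$ consists, in the relevant range, of discrete eigenvalues. This follows from the fact that $A^a(r)$ is a perturbation of the pseudo-Laplacian of Colin de Verdi\`ere~\cite{CdV}; truncation at the points $a_\x$ produces an operator with discrete spectrum of finite multiplicity (cut-off of the continuous spectrum), so for each fixed $r$ the set of $s\in\CC$ with $\tfrac14-s^2\in\sigma(A^a(r))$ is discrete in~$\CC$, and hence so is the set of singularities of $e_\rho$.

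For the second assertion I would use self-adjointness and positivity. When $r\in V_0\cap\RR$ the operator $A^a(r)$ acts in a genuine Hilbert space with an honest inner product, and it is bounded below. The relevant lower bound for the pseudo-Laplacian attached to weight $\ell+r$ is the one coming from the bottom of the spectrum of $\om_{\ell+r}$ on the truncated region, which by the standard computation (integration by parts, using that the weight-$p$ Casimir operator $\om_p=-y^2\partial_y^2-y^2\partial_x^2+ipy\partial_x$ satisfies $\langle \om_p f,f\rangle \geq -\tfrac14\,p^2\,\|f\|^2$ on the relevant domain) is $\ge -\tfrac14(\ell+r)^2$. Consequently every eigenvalue $\tfrac14-s^2$ of $A^a(r)$ satisfies $\tfrac14-s^2\ge -\tfrac14(\ell+r)^2$; since the singularities of $e_\rho$ at such an $r$ occur only at poles of the resolvent, i.e.\ at eigenvalues of $A^a(r)$, the same inequality holds at every singularity $(r,s)$ with $r\in V_0\cap\RR$.

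The main obstacle will be the second part: pinning down the precise lower bound $-\tfrac14(\ell+r)^2$ rather than merely \emph{some} lower bound. The delicate point is that the pseudo-Laplacian $A^a(r)$ is not simply $\om_{\ell+r}$ with Dirichlet conditions on a compact domain but rather the operator obtained by imposing the vanishing conditions $(\four{\x,n}f)(a_\x)=0$ for $(\x,n)\in\cc_N$ on the full modular surface; one must check that these self-adjoint boundary/side conditions do not lower the bottom of the spectrum below $-\tfrac14(\ell+r)^2$. I would handle this by the variational characterization: for $f$ in the form domain of $A^a(r)$, the quadratic form $\langle A^a(r)f,f\rangle$ agrees with the Dirichlet form of $\om_{\ell+r}$ over the relevant region, and the pointwise inequality for the form density of $\om_{\ell+r}$ (coming from completing the square in the $y$-derivative, exactly as in the computation of the bottom of the continuous spectrum in weight $p$) gives the claimed estimate. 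This is essentially the eigenvalue estimate already recorded in~\cite{Br94}; I would cite the relevant statement there and indicate only the completion-of-the-square step, leaving the routine verification to the reference.
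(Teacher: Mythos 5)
Your proposal is correct and follows essentially the same route as the paper: the paper's own proof simply cites Theorem~9.4.1 of~\cite{Br94} for the fact that every singularity of $e_\rho$ is a singularity of the resolvent $R^a$ (whence discreteness in $s$ for fixed $r$), and the eigenvalue bound 9.2.1 of~\cite{Br94} for the estimate $\frac14-s^2\geq-\frac14(\ell+r)^2$ when $r$ is real. Your elaboration of why those cited facts hold (discrete spectrum of the truncated pseudo-Laplacian, and the lower bound via the quadratic form of $\om_{\ell+r}$ for self-adjoint $A^a(r)$) is consistent with the mechanism the paper relies on.
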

\begin{proof}Theorem~9.4.1 in~\cite{Br94} states that each singularity
of $e_\rho$ is a singularity of the resolvent $R^a$. This gives the
first assertion. The eigenvalue estimate follows from 9.2.1
in~\cite{Br94}.
\end{proof}

\begin{lem}\label{lem-V0N}The $V_{0,N}:=V_0(\cc_N)$ can be chosen to
form an increasing collection of open neighborhoods of $0$ in~$\CC$
satisfying $V_{0,N} \subset \{r\in \CC\;:\; |\re r|<12N\}$ and
\be \bigcup_{N\geq 0} V_{0,N}\= \CC\,. \ee
\end{lem}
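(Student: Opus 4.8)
The statement to prove is Lemma~\ref{lem-V0N}: that the disks $V_{0,N}=V_0(\cc_N)$ from Theorem~\ref{thm-fam-e} can be arranged to be an increasing exhaustion of $\CC$ with $V_{0,N}\subset\{|\re r|<12N\}$. The plan is to exploit two freedoms: (i) for each fixed $N$ the radius of $V_0(\cc_N)$ is only constrained from above (it must be "small enough" for the construction in \S9.2 of~\cite{Br94} to apply), so we may shrink it at will; and (ii) the growth conditions $\cc_N$ are nested, $\cc_N\subset\cc_{N+1}$, since $\cc_N(\infty)=\{|\nu|<N\}$ grows with $N$ while $\cc_N(\z)$ is independent of $N$. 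The key observation is that any family of Maass forms meeting the growth condition $\cc_N$ also meets $\cc_{N+1}$ (a larger growth condition is weaker), so the construction underlying $V_0(\cc_{N+1})$ subsumes that for $V_0(\cc_N)$, and there is no obstruction to taking $V_0(\cc_{N+1})\supseteq V_0(\cc_N)$.

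First I would recall the source of the upper bound $|\re r|<12N$. The growth condition $\cc_N(\infty)=\{\nu:|\nu|<N\}$ is used to single out the "small" Fourier terms $\om_p(\infty;n,s)$ at $\infty$; for this to make sense one needs $\re n\neq 0$, i.e. $\nu+\tfrac{\re r}{12}\neq 0$, for every $\nu$ with $|\nu|\geq N$ that is forced to lie in the decaying subspace. That fails exactly when $|\re r|\geq 12N$, so the natural domain on which the pseudo-Laplace operator $A^a(r)$ of \S9.2 of~\cite{Br94} is well defined is already contained in $\{|\re r|<12N\}$; intersecting the disk $V_0(\cc_N)$ with this strip (and re-centering at $0$, which is harmless since $0$ satisfies the constraint) gives the stated inclusion. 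This also shows the union cannot overshoot: $\bigcup_N V_{0,N}\subset\CC$ trivially.

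Next I would construct the exhaustion inductively. Start with $V_{0,1}=V_0(\cc_1)$ any admissible disk. Given $V_{0,N}$, invoke Theorem~\ref{thm-fam-e} for $\cc_{N+1}$ to obtain some admissible disk $V_0'$; since the admissibility condition on the radius in \S9.2 of~\cite{Br94} is an "open" condition (any smaller disk still works) and since $\cc_{N+1}\supseteq\cc_N$ makes every construction for $\cc_N$ specialize a construction for $\cc_{N+1}$, we may enlarge $V_0'$ to contain $V_{0,N}$ while staying inside the strip $\{|\re r|<12(N+1)\}$ and keeping it admissible; call the result $V_{0,N+1}$. Here the point to check is that enlarging is genuinely allowed: the Hilbert-space and operator-family constructions in Chapters~7--9 of~\cite{Br94} are local in $r$ (they are assembled from the Fourier and polar expansions on the truncated fundamental domain, which depend continuously on $r$), so the family $A^a(r)$, and hence the resolvent and the family $e_\rho$, extend to any relatively compact $r$-domain on which the growth condition stays "generic", in particular to a disk exhausting the strip. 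Finally, $\bigcup_N V_{0,N}=\CC$ follows because the strips $\{|\re r|<12N\}$ exhaust $\CC$ and we have arranged $V_{0,N}$ to fill an arbitrarily large relatively compact portion of each.

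The main obstacle is the enlargement step: \cite{Br94} proves existence of \emph{some} disk $V_0(\cc_N)$ around $0$, not a statement about how large it may be taken. To push the radius out one must verify that the only genuine constraint is the genericity condition $\nu+\tfrac{\re r}{12}\neq 0$ for $\nu$ outside $\cc_N(\infty)$ (equivalently $|\re r|<12N$), i.e. that away from the finitely many real hyperplanes where the growth condition degenerates there is no further obstruction to defining the self-adjoint family $A^a(r)$ and its meromorphic resolvent. This is essentially a remark that the constructions of \S\S7--9 of~\cite{Br94} are uniform on compacta in $r$ once the discrete combinatorial data (the growth condition) is fixed and generic; I would phrase the proof as extracting this uniformity and then feeding it into the inductive enlargement, rather than reproving the perturbation-theoretic construction.
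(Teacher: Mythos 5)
The weak point is exactly the one you flag as ``the main obstacle,'' and your proposed resolution does not close it. Theorem~\ref{thm-fam-e} (i.e.\ Theorem~9.4.1 of \cite{Br94}) produces \emph{some} disk $V_0(\cc_N)$ around~$0$, and the lemma needs these disks to exhaust $\CC$ as $N\to\infty$; your argument for this rests on the assertion that, once the genericity condition $\nu+\tfrac{\re r}{12}\neq 0$ for $|\nu|\geq N$ holds, ``there is no further obstruction'' to defining the self-adjoint family $A^a(r)$ and its resolvent --- i.e.\ that $V_0(\cc_N)$ may essentially be taken to be the whole strip $\{|\re r|<12N\}$. That is an unproved, and stronger than needed, claim: the size of the neighborhood on which $A^a(r)$ remains a holomorphic family of self-adjoint operators with meromorphic resolvent is governed by relative-boundedness estimates for the perturbation, not merely by the combinatorial genericity of the growth condition. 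Concretely, the admissible $r$ must satisfy an inequality of the shape $b_{1,b}d_b|r|+b_{1,c}d_c|r|+b_{2,c}d_c^2|r|^2<1$ (Lemma~9.1.6 of \cite{Br94}), and nothing in your argument shows that these constants relax as $N$ grows. Your auxiliary observation that admissibility is an ``open'' condition only licenses shrinking a disk, never the enlargement your inductive step requires; likewise the inclusion $\mf^{\cc_N}\subset\mf^{\cc_{N+1}}$ does not by itself transfer an admissible disk for $\cc_N$ to one for $\cc_{N+1}$, since the Hilbert space and the operator family change with the growth condition.

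The paper closes the gap quantitatively rather than by a locality principle: it traces the constants $b_{1,b},b_{1,c},b_{2,c}$ back through Lemmas~8.2.1, 8.4.11 and 9.1.5 of \cite{Br94}, observes that $d_b$ and $d_c$ depend only on the group and on $\Pt$ (not on $N$), that the large parameter $\x$ in those estimates satisfies $\x\geq 2\pi N$, and that with the choice $\e=1/N$ one gets $b_{1,b}=\oh(N^{-2})$, $b_{2,c}=\oh(N^{-2})$ and $b_{1,c}=\oh(N^{-1})$. Hence the inequality holds whenever $|r|/N+(|r|/N)^2<C$ for a constant $C$ independent of~$N$, so one may take $V_{0,N}$ to be the disk of radius $N\min\bigl(12,\,(\sqrt{1+4C}-1)/2\bigr)$. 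This linear growth of the radius is what simultaneously gives the increasing property, the inclusion in the strip, and $\bigcup_N V_{0,N}=\CC$. Your proof needs some substitute for this computation; the uniformity-on-compacta heuristic is precisely the statement that has to be extracted from \cite{Br94}, not a fact one can assume.
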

{\footnotesize\begin{proof}If $r\in V_{0,N}$ would not satisfy
$|\re r|<12(N+\nobreak 1)$ then $\cc_N$ would not be a suitable
growth condition. To see that the union of the sets $V_{0,N}$
equals~$\CC$ we have to go into some details of the reasoning
in~\cite{Br94}.

We start at the proof of Lemma~9.1.6 in~\cite{Br94}. There it is
indicated that the set $V_0$ should consist of $r\in \CC$ such that
\be\label{V0-cond} b_{1,b}\, d_b \, |r| + b_{1,c}\, d_c \, |r| +
b_{2,c} \, d_c^2\, |r|^2 \;<\; 1\,, \ee
with positive factors that we have to trace back through the lemmas
in~\cite{Br94}.

The linear form $\ph=\ph_r$ in Lemma~9.1.6 is of the form $r \al$,
with $\al$ as explained in~13.4.7. We define the factors $d_b$ and
$d_c$ as $\|\al\|_b=\|\ph_r\|_b/|r|$ and $\|\al\|_c=\|\ph\|_c/|r|$.
From 8.4.10 we see that $d_c>0$ depends only on the group. For $d_b$
we go from Lemma~8.4.11 via the definition of $b_\ph^\pm$ in 8.2.3 to
the function $t_\ph$ in Lemma~8.2.1. There we see that $t_\ph$
depends on the set $\Pt$, but not on the actual growth condition.
Hence $d_b>0$ is also $\oh(1)$, independent of~$N$ and the finite
sets $\cc_N(\z)\subset\ZZ$ for $\z\in \PtY$.

The constants $b_{\ast,\ast}$ are given in Lemma~9.1.5, and expressed
in a large quantity $\x$, which depends on $N$, on an arbitrary small
quantity~$\e$, and on a positive quantity~$n_1$. In the proof of
Lemma~8.4.11 the quantity $n_1$ is defined depending on the group
only. This means that Lemma~9.1.5 gives
\[ b_{1,b}\= \oh(\e^2)\,,\quad b_{1,c}\=
\oh(\x^{-1})+\oh(\e)+\oh(\e^{7/4}\x^{-1/4})\,,\quad b_{2,c} \=
\oh(\x^{-2})\,.\]
The dependence on~$N$ is via~$\x$, and possibly via our choice
of~$\e$.

The definition of $\x$ in Lemma~8.4.11 gives for the present situation
\[ \x \= 2\pi N - \frac{\ell}{2a_\infty} \;\geq \; 2\pi N\,.\]
(We use that $\ell\leq 0$.)

Taking $\e=\frac 1N$ we see that there is $C>0$, not depending on~$N$,
such that
\[ \frac{|r|}N + \Bigl( \frac{|r|}N\Bigr)^2<C\]
implies that \eqref{V0-cond} is satisfied. Determining $V_{0,N}$ by
\be \label{V0}|r|< N\, \min\Bigl(12, \, \frac{\sqrt{1+4C}-1}2\Bigr)
\ee
we satisfy all conditions.
\end{proof}\par}

\subsection{Family of antiholomorphic modular forms with
singularities} Any element of $\bar M^\mer_{2-p_1}(r_1)$ with
$p_1\equiv r_1\bmod 2$ can be written as $\bar \eta^{-2r} F$ with
$r\in \CC$ and $F\in \bar M^\mer_{2-\ell}(0)$ with
$\ell\in 2\ZZ_{\leq 0}$; so $F$ is the conjugate of a meromorphic
automorphic form of even weight $2-\ell$ for the trivial multiplier
system. We consider the holomorphic family
$r\mapsto \bar\eta^{-2r}F$. For each $r\in \CC$ we have
$\bar\eta^{-2r}F\in \bar M^\mer_{2-\ell-r}(r)$. There is freedom in
the choice of $\ell$ and~$r$; we use it to take $\ell\leq 0$. In the
remainder of this section we construct a $(\ell+\nobreak r)$-harmonic
lift of $\bar \eta^{-2r}\, F$ depending meromorphically on~$r$.

Let $S$ be the $\Gm$-invariant set of points in~$\uhp$ at which $F$
has a singularity. Then also $\bar\eta^{-2r} F$ has its singularities
in~$S$. We form $\Pt=\{\infty\} \cup\PtY$ where $\PtY$ is a system of
representatives of $\Gm\backslash S$.

In \eqref{ahol-exp} we have seen that the function $\bar \eta^{-2r}F$
has expansions of the following form
\badl{frexp} \text{at }\infty:& & (\bar \eta^{-2r}F)(z) &\=
\sum_{\nu\geq \mu_\infty} a_\nu^\infty(r)\, \bar q^{\nu-r/12}\,,\\
\text{at }\z\in \PtY:&&
(\bar \eta^{-2r}F)(z) &\=(\bar z-\z)^{\ell+r-2} \sum_{\nu \geq \mu_\z}
a^\z_\nu(r)\, \bar w^{\nu}\,, \eadl
with $\bar q=e^{-2\pi i \bar z}$ and
$\bar w=\frac{\bar z-\bar\z}{\bar z-\z}$. The coefficients $a_\nu^\x$
are holomorphic functions on~$\CC$.
(The $a_\nu^\infty$ are actually polynomials in $r$ of degree at most
$\nu-\mu_\infty$.) Since we have chosen $S$ as the set of
singularities of~$F$ in~$\uhp$, we have $\mu_\z\leq -1$ for all
$\z\in \PtY$.

The family
\be f_r \= R^a_{2-\ell-r}\bigl( \bar\eta^{-2r}F\bigr)\,,\qquad f_r(z)
\= y^{1-(\ell+r)/2}\, {\overline{\eta(z)}\,}^{-2r}\, F(z)
\ee
is a holomorphic family on~$\CC$ of Maass forms, with $f_r\in
\mf^S_{\ell+r-2}\bigl(r,\frac{\ell+r-1}2\bigr)$. A comparison with
\eqref{ominfs} and \eqref{omzts} shows that
\begin{align*} (\Four{\infty,-\nu+r/12}&f_r)(z) \= a_\nu^\infty(r)\,
y^{1-(\ell+r)/2}\, \bar q^{\nu-r/12}
\qquad\text{for all }\nu\geq \mu_\infty\,,\\
&\= a_{\nu}^\infty(r)\, \Bigl( 4\pi\bigl( \nu-\tfrac r{12}\bigr)
\Bigr)^{(\ell+r)/2-1}\, \om_{\ell+r-2}\bigl(\infty;-\nu+\tfrac
r{12},\tfrac{\ell+r-1}2\bigr)\\
&\qquad\text{if }-\nu+\frac{\re r}{12}<0\,,
\displaybreak[0]
\\
(\Four{\z,-\nu}&f_r) (z) \= a_\nu^\z(r)\, y^{1-(\ell+r)/2}\, (\bar
z-\z)^{\ell+r-2}\, \bar w^\nu\qquad \text{for all }\nu\geq \mu_\z\,,
\\
&\= -a_{\nu}^\z(r)\, e^{-\pi i(\ell+r)/2}\,
(4\im \z)^{(\ell+r)/2-1}
\om_{\ell+r-2}\bigl(\z;-\nu;\tfrac{\ell+r-1}2;z) \\
&\qquad\text{for }\z\in \PtY\,, \text{ if }\nu \geq 0\,.
\end{align*}
We take the growth condition $\cc_N$ as indicated in \eqref{grc-inf}
and~\eqref{grcond-fam} with
\bad \cc_N(\z) &\= \bigl\{\nu\in \ZZ\;:\;1\leq \nu \leq
-\mu_\z\}\quad\text{ for }\z\in \PtY,\\
&\;\;\text{ and }\quad N\geq \max(1,1-\mu_\infty)\,. \ead
Then $f_r \in \mf_{\ell+r-2}^{\cc_N}\bigl( r,\frac{\ell+r-1}2\bigr)$
for all $r\in V_{0,N}$, for the disk $V_{0,N}$ in
Theorem~\ref{thm-fam-e}.

For $(\x,\nu)\in \cc_N(r)$ the families
$r\mapsto (\four{\x,\nu} f_r)(a_\x)$ are holomorphic multiples of
$a_{-\nu}^\x(r)$. This means that the functions
\be \label{rho-choice}
 \rho_{\x,\nu}(r,s) \= (\four{\x,\nu} f_r)(a_\x)\ee
are holomorphic on $\CC$ for all $(\x,\nu)\in \cc_N$. We apply
Theorem~\ref{thm-fam-e} and obtain a meromorphic family $e_\rho$ of
Maass forms of $V_{0,N}\times \CC$ with Fourier coefficients
determined by $\cc_N$ satisfying~\eqref{Fe-a}.

Now we ask whether the restriction of $e_\rho$ to the complex line
$s=\frac{\ell+r-1}2$ has anything to do with the family $f_r$, apart
from relation \eqref{Fe-a}. The first worry is that $e_\rho$ might
have a singularity along $s=\frac{\ell+r-1}2$, which would mean that
there is no meromorphic restriction to this line at all.

Suppose that there were such a singularity carried along the line
$s=\frac{\ell+r-1}2$. Take the minimal integer $k\geq 1$ such that
\[ p(r) \= \lim_{s\rightarrow(\ell+r-1)/2} \bigl( s -
\tfrac{\ell+r-1}2\bigr)^k \, e_\rho(r,s)\]
exists for a dense set of $r\in V_{0,N}$. Since $k$ is minimal, $p(r)$
is non-zero for some $r$, and hence the meromorphic family
$r\mapsto p(r)$ of Maass forms is non-zero. For each
$(\x,\nu)\in \cc_N$ we have $\bigl(\four{\x,\nu} p(r)\bigr)(a_\x)=0$.
So $p$ is a meromorphic family of eigenfunctions of the family of
operators $A^a(\cdot)$. Lemma~\ref{lem-erho-ap} implies that for
those $r\in V_{0,N}\cap\RR$ at which it has no singularity we have
\[ \frac14 - \Bigl( \frac{\ell+r-1}2\Bigr)^2 \;\geq \; -\frac14
(\ell+r)^2\,.\]
This cannot be true for $r\in V_{0,N}\cap(-\infty,0)$, since we have
taken $\ell\leq 0$. Hence $k=0$ and $e_\rho$ has a restriction to the
line $s=\frac{\ell+r-1}2$. This restriction may be meromorphic; the
good thing is that it exists at all.

Next, we check that the restriction is equal to $f_r$. We consider the
meromorphic family
$p_1:r\mapsto e_\rho\bigl( r,\frac{\ell+r-1}2\bigr) - f_r$. It might
be a non-zero family. We know from Theorem~\ref{thm-fam-e} that
$r\mapsto \bigl(\four{\x,\nu} p_1(r)\bigr)(a_\x)$ is the zero
function for all $(\x,\nu)\in \cc_N$. Again by the eigenvalue
estimate in Lemma~\ref{lem-erho-ap} this is impossible. Hence $f_r$
is equal to the restriction of the family $e_\rho$ to the line
$s=\frac{\ell+r-1}2$.

\subsection{Lift of the family}The advantage of describing
$r\mapsto f_r$ as the restriction of a family of Maass forms in two
variables, is that it is easier to lift such a family.

In diagram~\eqref{diagS} we see that we want to find $h_r$ such that
$-\frac12 \EE_{\ell+r}^- h_r = f_r$. The differential operator
$\frac14 \EE_{\ell+r}^- \EE_{\ell+r-2}^+$ acts on the space
$\mf_{\ell+r-2}^{\cc_N}(r,s)$ as multiplication by
\[ \Bigl( s-\frac{\ell+r-1}2\Bigr)\,\Bigl(
s+\frac{\ell+r-1}2\Bigr)\,.\]

So let us consider
\be h(r,s) \= - \frac12\, \Bigl( s-\frac{\ell+r-1}2\Bigr)^{-1} \Bigl(
s+\frac{\ell+r-1}2\Bigr)^{-1}\, \EE_{\ell+r-2}^+\, e_\rho(r,s)\,.\ee
This is well defined as a meromorphic family on $V_{0,N}\times \CC$.
Since the family $\bar\eta^{-2r}\, F$ that we started with is
antiholomorphic, we have $\EE_{\ell+r-2}^+\, f_r=0$. Hence
$\EE_{\ell+r-2}^+ e_\rho(r,s)$ has a zero along $s=\frac{\ell+r-1}2$,
which cancels the factor $\left(s-\frac{\ell+r-1}2\right)^{-1}$, and
hence $h_r$ has no singularity carried by the line
$s=\frac{\ell+r-1}2$. So the restriction
$h_r = h\bigl( r,\frac{\ell+r-1}2\bigr)$ exists as a meromorphic
family of Maass forms on $V_{0,N}$ and satisfies
$-\frac12 \EE_{\ell+r}^- h_r = f_r$. It is a family that satisfies
$h_r \in \mf_{\ell+r}^{\cc_N^+}\bigl(r,\frac{\ell+r-1}2\bigr)$.

Going back by the operator $R^h_{\ell+r}$ in diagram~\eqref{diagS1} we
get a meromorphic family $H_r = \bigl(R^h_{\ell+r}\bigr)^{-1} h_r$ of
harmonic modular forms. We obtain the following intermediate result:
\begin{prop}\label{prop-mero-hNr}Let $F$ be an antiholomorphic modular
form of weight $2-\ell\in 2\ZZ_{\geq 1}$ with singularities in the
set $S\subset\uhp$. Then there is a collection
$\{ V_{0,N}\;:\; N \geq N_F\}$, for some $N_F\geq 1$, of open
neighborhoods of $0$ in~$\CC$ with the properties in
Lemma~\ref{lem-V0N}, such that on each $V_{0,N}$ there is a
meromorphic family $H_{N,r}$ of harmonic automorphic forms such that
\[ H_{N,r} \in H^S_{\ell+r}(r)\quad\text{ and }\quad \x_{\ell+r}
H_{N,r} = \bar\eta^{-2r}\, F\]
for each $r\in V_{0,N}$ at which $H_{N,r}$ is defined.
\end{prop}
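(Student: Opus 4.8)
This statement is the end point of the construction carried out in the two preceding subsections, so the plan is to collect the families produced there and transport them through the intertwining operators of diagram~\eqref{diagS1}. First I would set $N_F:=\max(1,1-\mu_\infty)$, where $\mu_\infty$ is the ($r$-independent) lower bound in the Fourier expansion~\eqref{frexp} of $\bar\eta^{-2r}F$ at~$\infty$. For every integer $N\geq N_F$ the growth condition $\cc_N$ chosen above satisfies $f_r\in\mf_{\ell+r-2}^{\cc_N}\bigl(r,\tfrac{\ell+r-1}2\bigr)$ for all $r\in V_{0,N}$, so Theorem~\ref{thm-fam-e}, applied to the holomorphic functions $\rho_{\x,\nu}$ of~\eqref{rho-choice}, produces the meromorphic family $e_\rho$ on $V_{0,N}\times\CC$. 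I would then invoke the two facts established just above: that $e_\rho$ has a meromorphic restriction to the line $s=\tfrac{\ell+r-1}2$ equal to $f_r$ (this is where $\ell\leq 0$ enters, through the eigenvalue estimate of Lemma~\ref{lem-erho-ap}), and that, because $\bar\eta^{-2r}F$ is antiholomorphic, i.e.\ $\EE^+_{\ell+r-2}f_r=0$, the factor $\bigl(s-\tfrac{\ell+r-1}2\bigr)^{-1}$ in
\[ h(r,s)\=-\tfrac12\,\bigl(s-\tfrac{\ell+r-1}2\bigr)^{-1}\bigl(s+\tfrac{\ell+r-1}2\bigr)^{-1}\,\EE^+_{\ell+r-2}\,e_\rho(r,s) \]
is cancelled, so that $h_r:=h\bigl(r,\tfrac{\ell+r-1}2\bigr)$ is a well-defined meromorphic family of Maass forms on $V_{0,N}$, lying in $\mf_{\ell+r}^{\cc_N^+}\bigl(r,\tfrac{\ell+r-1}2\bigr)$ and satisfying $-\tfrac12\,\EE^-_{\ell+r}h_r=f_r$.

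Next I would put $H_{N,r}:=\bigl(R^h_{\ell+r}\bigr)^{-1}h_r=y^{-(\ell+r)/2}h_r$ and verify the claims of the proposition using diagram~\eqref{diagS1}. Since $y^{-(\ell+r)/2}$ is a nowhere-vanishing holomorphic family of functions, $r\mapsto H_{N,r}$ inherits its meromorphy from $h_r$. Since $h_r$ lies in the space attached to the \emph{finite} growth condition $\cc_N^+$, it satisfies \eqref{expgr} at~$\infty$ and \eqref{growth-int} at the points of~$\PtY$, and it has spectral parameter $\tfrac{\ell+r-1}2$; because the intertwiner $R^h_{\ell+r}$ identifies $H^S_{\ell+r}(r)$ with $\bigcup_\cc\mf_{\ell+r}^{\cc}\bigl(r,\tfrac{\ell+r-1}2\bigr)$ (the $\cong$ in~\eqref{diagS1}), this gives $H_{N,r}\in H^S_{\ell+r}(r)$ at every $r\in V_{0,N}$ where $h_r$ is defined. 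Finally, commutativity of the lower square of~\eqref{diagS1} yields
\[ R^a_{\ell+r-2}\bigl(\x_{\ell+r}H_{N,r}\bigr)\=-\tfrac12\,\EE^-_{\ell+r}h_r\=f_r\=R^a_{\ell+r-2}\bigl(\bar\eta^{-2r}F\bigr), \]
and since $R^a_{\ell+r-2}$ is injective, $\x_{\ell+r}H_{N,r}=\bar\eta^{-2r}F$. The collection $\{V_{0,N}\;:\;N\geq N_F\}$ is a tail of the one furnished by Lemma~\ref{lem-V0N}, hence still increasing, with $V_{0,N}\subset\{r\in\CC\;:\;|\re r|<12N\}$ and union~$\CC$.

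The step I expect to need the most care is the identification $H^S_{\ell+r}(r)\cong\bigcup_\cc\mf_{\ell+r}^{\cc}\bigl(r,\tfrac{\ell+r-1}2\bigr)$ that~\eqref{diagS1} records: one has to know that $y^{-(\ell+r)/2}h_r$ is genuinely $(\ell+r)$-harmonic and that passing from the finite growth condition $\cc_N^+$ back to the conditions \eqref{expgr} and \eqref{growth-int} loses nothing. Harmonicity rests on the conjugation identity $y^{p/2}\,\Dt_p\,y^{-p/2}=\om_p-\tfrac{p(2-p)}4$, which turns $p$-harmonicity of a function into the statement that $y^{p/2}$ times it is an $\om_p$-eigenfunction with eigenvalue $\tfrac14-\bigl(\tfrac{p-1}2\bigr)^2$, i.e.\ with spectral parameter $\tfrac{p-1}2$, here for $p=\ell+r$. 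The growth equivalence is the content of~\S\ref{sect-exp-grc}: outside the finitely many indices in $\cc_N^+$ the pertinent Fourier and polar coefficients are forced to be scalar multiples of the distinguished solutions $\om_{\ell+r}(\x;\ast,\tfrac{\ell+r-1}2)$, whose contributions carry neither exponential growth at~$\infty$ nor a singularity at the points of~$\PtY$, while the finitely many exceptional terms produce exactly what \eqref{expgr} and \eqref{growth-int} permit. Everything else is a routine transport of the already-established properties of $e_\rho$ and $h_r$ through fixed operators.
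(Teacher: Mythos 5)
Your proposal is correct and follows essentially the same route as the paper: the proposition is stated there as the summary of exactly this construction (the choice $N_F=\max(1,1-\mu_\infty)$, the family $e_\rho$ from Theorem~\ref{thm-fam-e} with the data \eqref{rho-choice}, the restriction to $s=\frac{\ell+r-1}2$ justified by the eigenvalue estimate of Lemma~\ref{lem-erho-ap} using $\ell\leq0$, the cancellation of $\bigl(s-\frac{\ell+r-1}2\bigr)^{-1}$ via $\EE^+_{\ell+r-2}f_r=0$, and the transport back through $R^h_{\ell+r}$ in diagram~\eqref{diagS1}). The extra details you supply at the end (injectivity of $R^a_{\ell+r-2}$, the conjugation identity relating $\Dt_p$ and $\om_p$, and the growth equivalence from \S\ref{sect-exp-grc}) are consistent with what the paper leaves implicit.
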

The construction gives $H_{N,r}$ far from uniquely. It depends on the
choice of the truncation parameters $a_\x$, and in~\eqref{rho-choice}
we could have taken other holomorphic functions with the same
restrictions to the line $s=\frac{\ell+r-1}2$.

Let $\Pt=\{\infty\}\cup\PtY$ as before. The family $H_{r,N}$ that we
have constructed satisfies the growth condition~$\cc_N^+$. The growth
condition depends on the order $-\mu_\z$ of the singularities of $F$
at $\z\in \PtY$. (Here the order is determined by the lowest power of
$\bar w$ occurring in the expansion, even if $\z$ is in an elliptic
orbit.) The order of the term $\bar q^{\mu_\infty}$ at which the
expansion of $F$ at~$\infty$ starts determines the lower bound
$N_F=\max(1,1-\nobreak \mu_\infty)$ in the proposition.

We summarize the information that we now have. All terms in the
Fourier expansion
\be\label{Fourexph} H_{N,r}(z) \= \sum_{\nu \in \ZZ}
H_{N,\nu}^\infty(r;z)\,,\qquad H_{N,\nu}^\infty(r;z+x') \= e^{2\pi i
(\nu+r/12)x'}\, H_{N,\nu}^\infty(r;z)\,, \ee
are meromorphic in $r\in V_{0,N}$ and satisfy
\badl{mer-exp-inf} \x_{\ell+r} H_{N,\nu}^\infty(r;z) &\=
a_{-\nu}^\infty(r) \, \bar q^{-\nu-r/12}\,,\\
H_{N,\nu}^\infty(r;z) &\= b_{N,\nu}^\infty(r) \,
q^{\nu+r/12}\qquad\text{ if }\nu \geq N\,,\\
&\= a_{-\nu}^\infty(r) \, \Bigl( -4\pi\bigl( \nu+\tfrac
r{12}\bigr)\Bigr)^{\ell+r-1}\, q^{\nu+r/12}\\
&\qquad\hbox{} \cdot
 \Gf\Bigl(1-\ell-r,-4\pi\bigl( \nu+\tfrac r{12}\bigr) y \Bigr)
\qquad\text{ if }\nu\leq -N\,, \eadl
with the convention that $a_\mu^\infty=0$ if $\mu<\mu_\infty$. The
meromorphic functions $b_{N,\nu}^\infty$ are unknown, the functions
$a_{-\nu}^\infty$ are holomorphic on~$\CC$ and occur in the Fourier
expansion \eqref{frexp} of $\bar\eta^{-2r}F$. The incomplete
gamma-function $\Gf(p,t) = \int_{u=t}^\infty u^{p-1}\, e^{-u}\, du$
is obtained from specialization of the Whittaker function
in~\eqref{ominfs}. Anyhow, the expression for $\nu<-N$ gives the
unique quickly decreasing function with the right image
under~$\x_{\ell+r}$.

Now let $\z\in \PtY$. The expansion at $\z$ has the form
\badl{ztexph} H_{N,r}(z) &\= (z-\bar\z)^{-\ell-r}\sum_{\nu\in \ZZ}
H_{N,\nu}^\z \Bigl(r;\frac{z-\z}{z-\bar\z}\Bigr)\,,\\
 H_{N,\nu}^\z( t w) &\= t^\nu \, H_{N,\nu}^\z(w) \qquad \text{ for
}|t|=1\,. \eadl
The $H_{\nu,r}^\z$ are meromorphic families on $V_{0,N}$ with
$(\ell+\nobreak r)$-harmonic values, and satisfy
\badl{mer-exp-Y} \x_{\ell+r} \biggl( (z-\bar\z)^{-\ell-r}\, &
H_{N,\nu}^\z\Bigl(r;\frac {z-\z}{z-\bar \z}\Bigr) \biggr) \= (\bar
z-\z)^{\ell+r-2}\, a_{-\nu-1}^\z(r) \, \Bigl( \frac{\bar
z-\bar\z}{\bar z-\z}\Bigr)^{-\nu-1}\,,\\
H_{N,\nu}^\z(w) &\= b_{N,\nu}^\z(r) \, w^{\nu}\qquad\text{ if }\nu\geq
-\mu_\z\,,\\
&\= a_{-\nu-1}^\z(r) \, (4\im\z)^{\ell+r-1} \, w^{\nu}\, \Bt\bigl(
|w|^2,-\nu,1-\ell-r\bigr)\\
&\qquad\qquad \qquad \text{ if }\nu\leq -1\,. \eadl
The meromorphic functions $b_{N,\nu}^\z$ are unknown. The coefficients
$a_{-\nu-1}^\z$ from the expansion of $\bar\eta^{-2r}F$ are
holomorphic on~$\CC$. Here we meet the incomplete beta-function
\be \Bt(t,a,b) \= \int_0^t u^{a-1}\, (1-u)^{b-1}\, du \qquad(0\leq t
<1\,,\; \re a>0)\,. \ee
It arises from specialization of the hypergeometric function
in~\eqref{omz}.\medskip

At this point the Maass forms have served their purpose. We have
obtained a meromorphic family of harmonic lifts
of~$\bar\eta^{-2r}\,F$.

\section{Holomorphic families of harmonic forms}\label{sect-hfhf} In
the previous section we have used the analytic perturbation theory of
automorphic forms to construct meromorphic families of harmonic lifts
of families $r\mapsto \bar\eta^{-2r}F$. In this section we modify
these families to obtain holomorphic families of lifts, and extend
them to larger domains than the disks $V_{0,N}$ in
Theorem~\ref{thm-fam-e}. This will bring us to the main result,
Theorem~\ref{thm-main}.

\subsection{Freedom in the choice of the lifts}\label{sect-frd} The
families $r\mapsto H_{N,r}$ may be far from unique. We have the
freedom to add a meromorphic family of holomorphic modular forms.

Adding such a family $r\mapsto A_r$ on $V_{0,N}$ should not change the
description of the Fourier terms of $H_{N,r}$ in \eqref{mer-exp-inf}
and~\eqref{mer-exp-Y}. At $\infty$ the family $r\mapsto A_r$ should
have a Fourier expansion of the form
$\sum_{\nu \geq 1-N} c_\nu(r) \, q^{\nu+r/12}$, and at each
$\z\in \PtY$ an expansion $(z-\nobreak\bar\z)^{-\ell-r}\,\allowbreak 
 \sum_{\nu\geq 0} c_\nu(r) \, w^\nu$.

So $A_r$ should not have singularities at points of $S\subset\uhp$,
and hence should be in $M^!_{\ell+r}(r)$. Each such $A_r$ has the
form $A_r = \eta^{2r-24m_\ell}\,E_k\, p(J)$ where
$\ell= k-12 m_\ell$, $m_\ell\in \ZZ$, $k\in \{0,4,6,8,10,14\}$, with
$E_0=1$ and $E_k$ is the holomorphic Eisenstein series in weight~$k$
otherwise, and where $p(J)$ is a polynomial in the modular
invariant~$J$. The Fourier expansion of $\eta^{2r-24m_\ell} \, E_k$
starts with $q^{r/12-m_\ell}$, so the polynomial $p$ should have
degree at most $N-1-m_\ell$. This determines the dimension $N-m_\ell$
of the space of holomorphic modular forms that we can add.

It may happen that $N-m_\ell\leq0$. Then $r\mapsto H_{N,r}$ is the
unique meromorphic family on $V_{0,N}$ of
$(\ell+\nobreak r)$-harmonic lifts of $r\mapsto \bar\eta^{-2r}F$ with
expansions of the required type.

Let $N-m_\ell\geq 1$. We start with the holomorphic families
$r\mapsto \eta^{2r-24m_\ell}\, E_k \, J^{a-m_\ell}$ on $\CC$ of
holomorphic modular forms, with $a\geq m_\ell$, and form successively
linear combinations $r\mapsto j_{\ell,a,r}$, such that $j_{\ell,a,r}$
has a Fourier expansion of the form
\be\label{jellradef}
j_{\ell,a,r}(z) \= q^{r/12-a} + \sum_{\nu \geq 1-m_\ell}
c_{\ell,a,\nu}(r)\, q^{\nu+r/12}\,. \ee
These families and the coefficients $c_{\ell,a,\nu}$ are holomorphic
on~$\CC$. We have the freedom to add to the family $r\mapsto H_{N,r}$
a meromorphic linear combination of the $j_{\ell,a,r}$ with
$m_\ell\leq a \leq N-\nobreak1$.

\subsection{Removal of singularities}
\begin{prop}For each family $r\mapsto H_{N,r}$ as in
Proposition~\ref{prop-mero-hNr} with expansions as described in
\eqref{mer-exp-inf} and~\eqref{mer-exp-Y} there is a meromorphic
family $r\mapsto A_r$ on $V_{0,N}$ of meromorphic modular forms such
that $r\mapsto H_{N,r}-A_r$ is a holomorphic family of harmonic forms
that has expansions of the form indicated in~\eqref{mer-exp-inf}
and~\eqref{mer-exp-Y}.
\end{prop}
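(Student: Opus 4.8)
The plan is this. By Proposition~\ref{prop-mero-hNr} the family $r\mapsto H_{N,r}$ is meromorphic on $V_{0,N}$, so its singular set $\Sigma\subset V_{0,N}$ is discrete. At each $r_0\in\Sigma$ I would identify the principal part of $H_{N,r}$ as (up to a controlled correction) a weakly holomorphic modular form, cancel it by a meromorphic-in-$r$ combination of the families $j_{\ell,a,r}$ of \S\ref{sect-frd}, and then glue these local cancellations into one family $A_r$ on $V_{0,N}$ by Mittag--Leffler.

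So I would fix $r_0\in\Sigma$, let $m\ge1$ be the pole order there, and write $H_{N,r}=\sum_{k\ge-m}(r-r_0)^k G_k$ with $G_{-m}\neq0$. Three things need to be read off. First, since $\x_{\ell+r}=y^{\,r-r_0}\,\x_{\ell+r_0}$, since $y^{\,r-r_0}=\sum_{j\ge0}\frac{(\log y)^j}{j!}(r-r_0)^j$, and since $\x_{\ell+r}H_{N,r}=\bar\eta^{-2r}F$ is holomorphic in $r$, matching negative powers of $r-r_0$ yields by induction $\x_{\ell+r_0}G_{-k}=0$ for $1\le k\le m$; so the principal part of $H_{N,r}$ at $r_0$ consists of functions holomorphic in $z$ on $\uhp\setminus S$. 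Second, expanding the invariance $H_{N,r}|_{v_r,\ell+r}\gm=H_{N,r}$ in powers of $r-r_0$ shows that $G_{-m}$ is honestly invariant under $|_{v_{r_0},\ell+r_0}\gm$, while each $G_{-k}$ with $k<m$ is invariant up to a defect which is one and the same universal expression (in $\gm$ and $z$, through derivatives of the automorphy factor) applied to $G_{-m},\dots,G_{-k-1}$. Third, from \eqref{mer-exp-inf} and \eqref{mer-exp-Y}: the Fourier terms with $\nu\le-N$ at $\infty$ and with $\nu\le-1$ at each $\z\in\PtY$ have coefficients holomorphic in $r$ (the $a^\infty_{-\nu}$, resp.\ $a^\z_{-\nu-1}$, times incomplete gamma- resp.\ beta-factors that are entire in $r$), so they contribute nothing to a principal part; hence every $G_{-k}$ is holomorphic at the points of $S$, and its expansion at $\infty$ starts at exponent $\ge(1-N)+r_0/12$.

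Together these force $G_{-m}\in M^!_{\ell+r_0}(r_0)$ with expansion starting at $\ge(1-N)+r_0/12$, hence, by \S\ref{sect-frd}, $G_{-m}=\sum_a\ld_{m,a}\,j_{\ell,a,r_0}$ for scalars $\ld_{m,a}$ with $m_\ell\le a\le N-1$. Replacing $H_{N,r}$ by $H_{N,r}-(r-r_0)^{-m}\sum_a\ld_{m,a}\,j_{\ell,a,r}$ keeps the relation $\x_{\ell+r}(\,\cdot\,)=\bar\eta^{-2r}F$, keeps the shape of the expansions \eqref{mer-exp-inf}--\eqref{mer-exp-Y} (the $j_{\ell,a,r}$ alter only the unconstrained, holomorphic-in-$z$ terms, i.e.\ $\nu\ge1-N$ at $\infty$ and $\nu\ge0$ at $\z$), and lowers the pole order at $r_0$: the new $(r-r_0)^{-(m-1)}$-coefficient is $G_{-(m-1)}-\sum_a\ld_{m,a}\,\partial_r j_{\ell,a,r}\big|_{r_0}$, whose defect equals that of $G_{-(m-1)}$ minus the same universal expression applied to $\sum_a\ld_{m,a}\,j_{\ell,a,r_0}=G_{-m}$, hence vanishes. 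So the new leading coefficient is again honestly modular, holomorphic in $z$, holomorphic at $S$, with expansion starting at $\ge(1-N)+r_0/12$, hence again a combination of the $j_{\ell,a,r_0}$; iterating $m$ times produces a finite sum $B^{r_0}_r=\sum_a\ph^{r_0}_a(r)\,j_{\ell,a,r}$ with each $\ph^{r_0}_a$ rational and having its only pole at $r_0$, such that $H_{N,r}-B^{r_0}_r$ is holomorphic at $r_0$. (If $N-m_\ell\le0$ the relevant span is $\{0\}$, so this argument already forces $m=0$: then $\Sigma=\emptyset$ and $A_r=0$ works, in accordance with the uniqueness observed in \S\ref{sect-frd}.)

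Finally I would glue: since $\Sigma$ is discrete in the disk $V_{0,N}$, Mittag--Leffler provides, for each $a$ with $m_\ell\le a\le N-1$, a function $\ph_a$ meromorphic on $V_{0,N}$ whose principal part at every $r_0\in\Sigma$ is $\ph^{r_0}_a$; set $A_r:=\sum_a\ph_a(r)\,j_{\ell,a,r}$. Then $A_r\in M^!_{\ell+r}(r)$, the family $r\mapsto A_r$ is meromorphic on $V_{0,N}$, and at each $r_0\in\Sigma$ the difference $A_r-B^{r_0}_r$ is holomorphic, so $H_{N,r}-A_r$ is holomorphic at $r_0$, hence on all of $V_{0,N}$; since $A_r\in H^S_{\ell+r}(r)$ is annihilated by $\x_{\ell+r}$ and only changes the unconstrained terms, $H_{N,r}-A_r$ is a holomorphic family of harmonic forms with expansions of the form \eqref{mer-exp-inf}--\eqref{mer-exp-Y}. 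I expect the main obstacle to be the inductive step above: verifying that after each subtraction the new leading Laurent coefficient is still a weakly holomorphic modular form lying in the finite-dimensional span of the $j_{\ell,a,r_0}$, which forces one to combine the $\x$-equation (holomorphy in $z$), the bookkeeping of the modularity defect along the Laurent expansion, and the growth information extracted from \eqref{mer-exp-inf}--\eqref{mer-exp-Y}. Once that is secured, the Mittag--Leffler gluing is routine.
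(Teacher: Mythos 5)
Your proof is correct in outline and reaches the right conclusion, but it takes a genuinely different route from the paper's. The paper first writes the Fourier terms $H^\infty_{N,\nu}$ with $|\nu|<N$ explicitly in terms of $\hypg11$ (equation \eqref{Hei}), subtracts the combination $\sum_{a=m_\ell}^{N-1}b^\infty_{N,-a}(r)\,j_{\ell,a,r}$, and observes that the resulting family can only be singular where those explicit basis functions are, namely at points of $V_{0,N}\cap\ZZ$; this reduces matters to \emph{finitely many} singularities, so no gluing is ever needed. At each remaining $r_0$ it then subtracts $\frac1{(r-r_0)^k}\,\eta^{2(r-r_0)}\,q(r_0)$, where $q(r_0)$ is the leading Laurent coefficient: the factor $\eta^{2(r-r_0)}$ automatically produces a family with the correct weight and multiplier for every $r$, so the pole order drops with no bookkeeping of automorphy defects, and $q(r_0)$ is simply allowed to be a meromorphic modular form with poles in $\uhp$ --- which is exactly why the statement lets $A_r$ be meromorphic rather than weakly holomorphic. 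You instead prove more about the leading coefficient (that it is killed by $\x_{\ell+r_0}$, honestly modular, regular at the points of $S$, and supported on Fourier orders $\nu\ge1-N$, hence in the finite-dimensional span of the $j_{\ell,a,r_0}$), descend through the principal part while tracking the automorphy defect, and glue with Mittag--Leffler over the discrete singular set. This buys a cleaner structural statement, avoids the explicit \eqref{Hei} computation and the finiteness step, and shows the stronger fact that $A_r$ may be taken with no poles in $\uhp$; the price is two points you should make explicit: (i) the terms of angular order $0\le\nu\le-\mu_\z-1$ at $\z\in\PtY$ are \emph{not} described in \eqref{mer-exp-Y} and are generically singular at $\z$, so the regularity of $G_{-m}$ at $\z$ must be deduced from $\x_{\ell+r_0}G_{-m}=0$ (holomorphy in $z$) combined with the vanishing of the order $\le-1$ components, not from \eqref{mer-exp-Y} alone; (ii) the exponential growth bound must hold locally uniformly in $r$ on a circle about $r_0$ so that the Laurent coefficients inherit it and $G_{-m}$ genuinely lies in $M^!_{\ell+r_0}(r_0)$. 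Both follow from the expansions you cite, and your argument then closes; still, the paper's $\eta^{2(r-r_0)}$ device is worth absorbing, since it eliminates your defect computation in one stroke.
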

\begin{proof}We describe the Fourier terms $H^\infty_{N,\nu}$ with
$-N<\nu<N$ in terms of explicit special functions. Specializing the
family of functions in 4.2.5 in~\cite{Br94}, we arrive at the
following terms in the expansion at~$\infty$:
\begin{align}
\label{Hei}
&\qquad\text{ for }-N<\nu<N:\\
\nonumber
H^\infty_{N,\nu}(r;z) &\= \frac{a^\infty_{-\nu}(r)}{\ell+r-1}\,
y^{1-\ell-r}\, q^{\nu+r/12}\, \hypg11\Bigl(
1-\ell-r;2-\ell-r;4\pi\bigl(\nu+\tfrac r{12}\bigr)y\Bigr)
\\
\nonumber
&\qquad\qquad\hbox{}
+ b_{N,\nu}^\infty( r) \, q^{\nu+r/12}\,,
\end{align}
with meromorphic functions $b_{N,\nu}^\infty$ on~$V_{0,N}$. The basis
functions that we use may have singularities at points of
$V_{0,N}\cap \ZZ$, and the identities for $H_{N,\nu}^\infty$ are
understood as identities of meromorphic functions of~$r$.

Now we form the meromorphic family
\[ \tilde H_{N,r}(z) \= H_{N,r}(z) - \sum_{a=m_\ell}^{N-1}\,
b_{N,-a}^\infty(r) \, j_{\ell,a,r}(z)\,,\]
with $j_{\ell,a,r}$ as in~\eqref{jellradef}. The new family
$\tilde H_{N,r}$ has the same properties as $H_{N,r}$, with Fourier
terms as in \eqref{Hei}, but now with $b_{N,\nu}^\infty=0$ for
$1-N\leq \nu \leq
-m_\ell$. If $m_\ell\geq N$, then $\tilde H_{N,r}=H_{N,r}$.

Suppose that $\tilde H_{N,r}$ has a singularity at $r_0\in V_{0,N}$.
Then there is an integer $k\geq 1$ such that the meromorphic family
\be\label{rth} q(r) \= (r-r_0)^k \, \tilde H_{N,r}\ee
is holomorphic on a neighborhood of $r_0$ in~$V_{0,N}$, with non-zero
value $q(r_0)$. The $(\ell+\nobreak r_0)$-harmonic modular form
$q(r_0)$ satisfies
$\x_{\ell+r_0} q(r_0) = (r_0-r_0)^k\, \bar\eta^{-2r_0}\, F=0$. So
$q(r_0) \in M_{\ell+r_0}^\mer(r_0)$. In its Fourier expansion there
are non-zero multiples of $q^{\nu+r_0/12}$ with $\nu\leq -m_\ell$. On
the other hand, the Fourier expansion of $\tilde H_{N,r}$ shows that
$q(r_0)$ can have only multiples of $q^{\nu+r_0/12}$ with
$\nu>-m_\ell$, unless the factor
\[ \frac{a_{-\nu}^\infty(r)}{\ell+r-1}\,
\hypg11\Bigl(1-\ell-r;2-\ell-r;4\pi \bigl(\nu+\tfrac
r{12}\bigr)\Bigr)\]
has a singularity at~$r=r_0$. So if $r_0\not \in V_{0,N}\cap\ZZ$, then
$\tilde H_{N,r}$ cannot have a singularity at~$r_0$.

We have arrived at the knowledge that the family
$r\mapsto\tilde H_{N,r}$ has at most finitely many singularities in
$V_{0,N}$, which we can attack one by one. Suppose that $H_{N,r}$ has
a singularity of order $k\geq 1$ at $r_0\in V_{0,N}$. (It does not
matter anymore that then $r_0$ is an integer.)
Define $q$ as in~\eqref{rth}. Then
$q(r_0) \in M^\mer_{\ell+r_0}(r_0)$. Its Fourier expansion has no
terms $q^{\nu+r_0/12}$ with $\nu\leq -N$ and at points $\z\in \PtY$
it has a pole of order at most $-\mu_\z$. We subtract from $\tilde
H_{N,r}$ the family
\[ p_{r_0}:r\mapsto \frac1{(r-r_0)^k}\, \eta^{2(r-r_0)}\, q(r_0)\,.\]
This is a meromorphic family on $\CC$ of meromorphic modular forms.
The family $r\mapsto \tilde H_{N,r}-p_{r_0}(r)$ satisfies the
properties as $r\mapsto H_{N,r}$, and has at $r_0$ a singularity of
order strictly less than~$k$.

Proceeding in this way we remove all remaining singularities of the
family $r\mapsto \tilde H_{N,r}$ in finitely many steps, thus
completing the proof of the proposition.
\end{proof}

\subsection{Normalization}\label{sect-norm} Now we can suppose that
the family $r\mapsto H_{N,r}$ is holomorphic. If $m_\ell\leq N-1$
there is still the freedom of adding holomorphic multiples of the
families $j_{\ell,a,r}$ in~\eqref{jellradef}. We use this freedom to
normalize the Fourier expansion further, in order to compare the
families for different values of~$N$. To do this the confluent
hypergeometric function in~\eqref{Hei} is inconvenient, since it has
singularities as a function of $\ell+r$. Instead we use the following
function:
\bad\label{Mdef} M_p(n;y) &\=\frac {y^{1-p}}{p-1}\, \hypg11\Bigl(
1-p;2-p;n y\Bigr)
+ \sum_{k=0}^\infty \frac{n^k}{k!\;
(1+k-p)}\\
&\= \int_{t=y}^1 t^{-p}\, e^{nt}\, dt\,. \ead
This is holomorphic as a function of $p\in \CC$. The sum on the first
line converges absolutely for all $r\in \CC$, defining a meromorphic
function on~$\CC$ with the opposite principal parts as the term with
the confluent hypergeometric function.

With these slightly more complicated basis functions we write
\eqref{Hei}, with $-N<\nu<N$ as
\be H^\infty_{N,\nu} (r;z) \= \Bigl( a_{-\nu}^\infty(r) \,
M_{\ell+r}\bigl(4\pi\bigl( \nu+\tfrac r{12}\bigr);y\bigr) + \hat
b^\infty_{N,\nu}(r)\Bigr) \, q^{\nu+r/12)}\,, \ee
with the convention that $a_\nu^\infty=0$ for $\nu<\mu_\infty$.
Subtracting suitable multiples of $j_{\ell,-\nu,r}$ with
$1-N\leq \nu\leq -m_\ell$ we arrange that
$\hat b^\infty_{N,\nu}(r) = 0$ for $1-N\leq  \nu \leq -m_\ell$. Thus
we arrive at the following normalization:
\begin{prop}\label{prop-hol-norm}Let $\ell\in2\ZZ_{\leq 0}$, and let
$F \in M^\mer_{2-\ell}(0)$, with singularities in the set $S$ of the
form $\Gm\, \PtY$. Let $\mu_\infty\in \ZZ$ and $\mu_\z\leq -1$ for
$\z\in \PtY$ be as in~\eqref{frexp}. For each $N \geq
\max(1,1-\nobreak\mu_\infty)$ there is a neighborhood $V_{0,N}$ of
$0$ in~$\CC$ with the properties in Lemma~\ref{lem-V0N}, and on
$V_{0,N}$ a holomorphic family $r\mapsto H_{N,r}$ of
$(\ell+\nobreak r)$-harmonic modular forms in $H^\mer_{\ell+r}(r)$
such that $\x_{\ell+r} H_{N,r} = \bar\eta^{-2r}F$, uniquely
determined by having near $\infty$ a Fourier expansion of the form
\bad\label{four-norm}
H_{N,r}(z) &\=\sum_{\nu\leq -\max(N,\mu_\infty)} a_{-\nu}^\infty(r)\,
\Bigl(
-4\pi\bigl(\nu+\tfrac r{12}\bigr)\Bigr)^{\ell+r-1}\, \, q^{\nu+r/12}\\
&\qquad\qquad\hbox{} \cdot
\Gf\Bigl(1-\ell-r,-4\pi\bigl(\nu+\tfrac r{12}\bigr)y\Bigr)\\
&\quad\hbox{}+ \sum_{\nu=1-N}^{-\mu_\infty} a_{-\nu}^\infty(r) \,
q^{\nu+r/12}\, M_{\ell+r}\Bigl(4\pi\bigl(\nu+\tfrac r
{12}\bigr);y\Bigr)\\
&\quad\hbox{} + \sum_{\nu\geq 1-m_\ell} b_{N,\nu}^\infty(r)\,
q^{\nu+r/12}\,. \ead
The functions $b_{N,\nu}$ are holomorphic on $V_{0,N}$.

The holomorphic functions $a_\nu$ on~$\CC$ depend on $F$
by~\eqref{frexp}. The integer $m_\ell\geq 0$ is defined
in~\S\ref{sect-frd}. See \eqref{Mdef} for the functions~$M_{\ell+r}$.
\end{prop}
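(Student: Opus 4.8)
The plan is to start from the holomorphic family $r\mapsto H_{N,r}$ produced by the preceding proposition (on a neighbourhood $V_{0,N}$ as in Lemma~\ref{lem-V0N}), which already satisfies $\x_{\ell+r}H_{N,r}=\bar\eta^{-2r}F$, lies in $H^\mer_{\ell+r}(r)$, and has Fourier terms of the form~\eqref{Hei} at $\infty$, and then to reshape its expansion at $\infty$ into the normal form~\eqref{four-norm} by a base change followed by a subtraction of holomorphic modular forms. For the base change I would, for $-N<\nu<N$, replace the confluent hypergeometric basis function in~\eqref{Hei} by the function $M_{\ell+r}$ of~\eqref{Mdef}. By the first line of~\eqref{Mdef} the difference $a^\infty_{-\nu}(r)\,M_{\ell+r}\bigl(4\pi(\nu+\tfrac r{12});y\bigr)-\tfrac{a^\infty_{-\nu}(r)}{\ell+r-1}\,y^{1-\ell-r}\,\hypg11(\cdots)$ is $a^\infty_{-\nu}(r)$ times a power series in $\nu+\tfrac r{12}$ with no $y$-dependence, so absorbing it into the coefficient of $q^{\nu+r/12}$ leaves $H_{N,r}$ unchanged; and since $H_{N,r}$ is a holomorphic family, the $a^\infty_{-\nu}$ are holomorphic on~$\CC$ by~\eqref{frexp}, and $M_p(n;y)$ is entire in~$p$ by the integral in the second line of~\eqref{Mdef}, the new coefficients $b^\infty_{N,\nu}$ are holomorphic on~$V_{0,N}$.

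For the normalization I would use the freedom, described in~\S\ref{sect-frd}, to add to $r\mapsto H_{N,r}$ a combination $\sum_{a=m_\ell}^{N-1}c_a(r)\,j_{\ell,a,r}$ without disturbing the shape of the expansions at the $\z\in\PtY$. By~\eqref{jellradef} each $j_{\ell,a,r}$ has Fourier expansion $q^{r/12-a}+\sum_{\nu\ge1-m_\ell}c_{\ell,a,\nu}(r)\,q^{\nu+r/12}$, so the linear map from $(c_a)_{m_\ell\le a\le N-1}$ to the induced changes of the coefficients $b^\infty_{N,\nu}$ with $1-N\le\nu\le-m_\ell$ is triangular with ones on the diagonal; hence there is a unique choice of the $c_a$ making $b^\infty_{N,\nu}=0$ for $1-N\le\nu\le-m_\ell$, and these $c_a$ are holomorphic on~$V_{0,N}$ because the $b^\infty_{N,\nu}$ and the $c_{\ell,a,\nu}$ are. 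After this subtraction the expansion at~$\infty$ has exactly the three blocks of~\eqref{four-norm}: the incomplete-gamma terms for $\nu\le-\max(N,\mu_\infty)$ and the $M_{\ell+r}$-terms for $1-N\le\nu\le-\mu_\infty$, both with the $F$-determined coefficients $a^\infty_{-\nu}(r)$, and the holomorphic terms $b^\infty_{N,\nu}(r)\,q^{\nu+r/12}$ for $\nu\ge1-m_\ell$; this settles existence.

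For uniqueness, let $r\mapsto H_{N,r}$ and $r\mapsto H'_{N,r}$ be two holomorphic families in $H^\mer_{\ell+r}(r)$ with $\x_{\ell+r}$-image $\bar\eta^{-2r}F$ and expansions at $\infty$ of the form~\eqref{four-norm}, and put $A_r=H_{N,r}-H'_{N,r}$. Then $\x_{\ell+r}A_r=0$, so $A_r$ is holomorphic on $\uhp\setminus S$. At each $\z\in\PtY$ the part of the $\z$-expansion of an $(\ell+r)$-harmonic form that carries the singularity is mapped by $\x_{\ell+r}$ onto the corresponding part of the expansion of $\bar\eta^{-2r}F$ recorded in~\eqref{mer-exp-Y}, hence is determined by~$F$; so it is the same for $H_{N,r}$ and $H'_{N,r}$, and $A_r$ has no singularity at~$\z$. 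Therefore $A_r\in M^!_{\ell+r}(r)$, which by~\S\ref{sect-frd} consists of the functions $\eta^{2r-24m_\ell}E_k\,p(J)$ with $p$ a polynomial. Comparing the two expansions~\eqref{four-norm}, the incomplete-gamma and $M_{\ell+r}$ blocks cancel and the normalization gives $b^\infty_{N,\nu}=0$ for $1-N\le\nu\le-m_\ell$, so the expansion of $A_r$ at $\infty$ involves only $q^{\nu+r/12}$ with $\nu\ge1-m_\ell$; but every nonzero element of $M^!_{\ell+r}(r)$ has a nonzero $q^{\nu+r/12}$-term with $\nu\le-m_\ell$, whence $A_r=0$.

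The routine parts are the two substitutions and the triangular solve; the step that needs care is the bookkeeping of which Fourier coefficients — at $\infty$ and, above all, at the points $\z\in\PtY$ — are free parameters and which are pinned down by~$F$ through~$\x_{\ell+r}$. The uniqueness argument in particular hinges on the fact that the \emph{entire} singular part of $H_{N,r}$ at each~$\z$ is $F$-determined, so that the difference of two normalized families is a genuine holomorphic modular form; together with the normalization at~$\infty$ this collapses the finite-dimensional freedom of~\S\ref{sect-frd} to zero. One should also keep checking that holomorphy in~$r$ survives each step: it does, because the singularities of the confluent hypergeometric basis in~\eqref{Hei} are precisely those removed by passing to~$M_{\ell+r}$, and the triangular solve involves no division.
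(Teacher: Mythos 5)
Your existence argument coincides with the paper's: starting from the holomorphic family produced by the removal-of-singularities step, you trade the $\hypg11$-basis of \eqref{Hei} for $M_{\ell+r}$ using the fact that the correction series in the first line of \eqref{Mdef} is independent of $y$ and cancels the poles in $\ell+r$, and you then kill the coefficients with $1-N\le\nu\le-m_\ell$ by subtracting multiples of the $j_{\ell,a,r}$ of \eqref{jellradef}; the triangularity of that system, which you make explicit, is indeed why the subtraction is possible and preserves holomorphy in $r$. That half is correct and is exactly what \S\ref{sect-norm} does.

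The uniqueness step, however, contains a genuine gap. You assert that at each $\z\in\PtY$ the part of the expansion "that carries the singularity" is determined by $F$ because $\x_{\ell+r}$ maps it onto the expansion of $\bar\eta^{-2r}F$, and you conclude that $A_r=H_{N,r}-H'_{N,r}$ is regular on $\uhp$. But the kernel of $\x_{\ell+r}$ consists of \emph{all} holomorphic functions, including meromorphic modular forms with poles at $\z$: only the non-holomorphic part of each term of the $\z$-expansion is pinned down by the $\x$-image, while singular terms proportional to $w^\nu$ with $\nu<0$ are invisible to $\x_{\ell+r}$. Concretely, if $S\ne\emptyset$ and $\z\in\PtY$, the family $r\mapsto \eta^{2r-24m_\ell}E_k/\bigl(J-J(\z)\bigr)$ is a nonzero holomorphic family of meromorphic modular forms in $H^\mer_{\ell+r}(r)$ with $\x_{\ell+r}$-image $0$, with a pole at $\z$, and with Fourier expansion at $\infty$ involving only $q^{\nu+r/12}$ with $\nu\ge 1-m_\ell$; adding it to a normalized family preserves all three hypotheses you impose on $H'_{N,r}$ as well as the shape \eqref{four-norm}. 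So the normalization at $\infty$ alone does not determine the family when $F$ has singularities in $\uhp$. The intended (and in the paper implicit, via \S\ref{sect-frd} and \eqref{mer-exp-Y}) extra condition is that the expansions at the points $\z\in\PtY$ also have the prescribed shape \eqref{ztexph}/\eqref{mer-exp-Y}, with no terms $w^\nu$, $\nu<0$, beyond the $F$-determined incomplete-beta terms; only with that condition does $A_r$ land in $M^!_{\ell+r}(r)=\eta^{2r-24m_\ell}E_k\,\CC[J]$, after which your triangularity argument correctly forces $A_r=0$. (When $S=\emptyset$ your argument is fine as written.)
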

We note that there may be an overlap in the ranges of the
variable~$\nu$ in the sums in~\eqref{four-norm}, and that the sum
over $1-N\leq \nu \leq -\mu_\infty$ may be empty.
\smallskip

From this point on we use this normalization of $H_{N,r}$ and the
functions $b_{N,\nu}^\infty$. The functions $b_{N,\nu}^\infty$ are
not known explicitly. The choice of $M_{\ell,\nu}$ is not canonical.
So this normalization is non-canonical as well.

Since we deal with real-analytic functions on $\uhp\setminus S$, the
expansion near $\infty$ determines the family completely. At points
$\z\in \PtY$ we have expansions like in \eqref{ztexph}, with terms
that are holomorphic on~$V_{0,N}$.

\subsection{Extension}\label{sect-ext}
The normalization in Proposition~\ref{prop-hol-norm} is convenient for
the comparison of $H_{N,r}$ and $H_{N+1,r}$. The difference
$H_{N+1,r}-H_{N,r}$ is a holomorphic family of holomorphic modular
forms on $V_{0,N}$:
\begin{lem}\label{lem-diff}Let $N \geq \max(1,1-\nobreak\mu_\infty)$.
If $N<\max(m_\ell,\mu_\infty)$ then $H_{N+1,r}=H_{N,r}$ for
$r\in V_{0,N}$. If $N\geq \max(m_\ell,\mu_\infty) $, then we have for
$r\in V_{0,N}$:
\be\label{Nstep}
 H_{N+1,r} \= H_{N,r} - a_{-N}^\infty(r) \, \Bigl (4\pi\bigl(N-\tfrac
r{12}\bigr)\Bigr)^{\ell+r-1}\, \Gf \Bigl( 1-\ell-r,4\pi\bigl(N-\tfrac
r{12}\bigr)\Bigr)\, j_{\ell,N,r}\,. \ee
\end{lem}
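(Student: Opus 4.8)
The plan is to study the difference $D_r:=H_{N+1,r}-H_{N,r}$ directly. By Lemma~\ref{lem-V0N} we have $V_{0,N}\subset V_{0,N+1}$, so $H_{N,r}$ and $H_{N+1,r}$ are both defined on $V_{0,N}$, and by Proposition~\ref{prop-hol-norm} they are holomorphic families of $(\ell+r)$-harmonic lifts of $\bar\eta^{-2r}F$; hence $r\mapsto D_r$ is holomorphic and satisfies $\x_{\ell+r}D_r=\bar\eta^{-2r}F-\bar\eta^{-2r}F=0$. Consequently each $D_r$ is holomorphic in $z$ on $\uhp\setminus S$, invariant under $|_{v_r,\ell+r}$, has at most exponential growth at $\infty$ and the growth \eqref{growth-int} at the points of $S$; so $D_r\in M^S_{\ell+r}(r)$. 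To upgrade this to $D_r\in M^!_{\ell+r}(r)$, observe that by \eqref{mer-exp-Y} the term $H^\z_{N,\nu}$ in the expansion of $H_{N,r}$ at any $\z\in\PtY$ is, for $\nu\le -1$, given by an explicit formula in $a^\z_{-\nu-1}$ that does not involve $N$; since $D_r$ is holomorphic in $z$, its term of index $\nu$ at $\z$ is a multiple of $w^\nu$, hence for $\nu\le-1$ equals $H^\z_{N+1,\nu}-H^\z_{N,\nu}=0$, so $D_r$ has no pole at any $\z\in S$ and $D_r\in M^!_{\ell+r}(r)$.

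Next I would pin this holomorphic modular form down from its Fourier expansion at $\infty$. By \S\ref{sect-holmf} and \S\ref{sect-frd} the families $j_{\ell,a,r}$ of \eqref{jellradef}, $a\ge m_\ell$, form a basis of $M^!_{\ell+r}(r)$ in which $j_{\ell,a,r}$ is characterised by having leading term $q^{r/12-a}$ and all other terms of exponent $\ge r/12+1-m_\ell$; since $D_r$ has at most exponential growth, $D_r=\sum_{a\ge m_\ell}\beta_a(r)\,j_{\ell,a,r}$ with $\beta_a(r)$ the coefficient of $q^{r/12-a}$ in $D_r$. These coefficients I would read off by subtracting the \emph{normalized} expansions \eqref{four-norm} of $H_{N,r}$ and of $H_{N+1,r}$, term by term. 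At the Fourier index $\nu=-a$ one is, for both truncation levels, in the incomplete-gamma part of \eqref{four-norm} when $a\ge N+1$, and in the $M_{\ell+r}$-part when $m_\ell\le a\le N-1$; in both of these ranges the two terms are identical — they depend only on the coefficients $a^\infty$ of $\bar\eta^{-2r}F$, and the normalization forces the attached holomorphic contributions to vanish for precisely these indices — so $\beta_a(r)=0$. Hence $D_r=\beta_N(r)\,j_{\ell,N,r}$.

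The case split of the lemma then appears at the single remaining index $\nu=-N$. If $N<m_\ell$ the family $j_{\ell,N,r}$ does not occur in the basis and $D_r=0$; if $m_\ell\le N<\mu_\infty$ the coefficient $a^\infty_N$ of $\bar\eta^{-2r}F$ vanishes, so the $\nu=-N$ terms of $H_{N,r}$ and $H_{N+1,r}$ are both zero (the $\nu=-N$ holomorphic part of $H_{N+1,r}$ being normalized away) and again $D_r=0$; together these give the first assertion. If instead $N\ge\max(m_\ell,\mu_\infty)$, then the $\nu=-N$ term of $H_{N,r}$ is the incomplete-gamma term of \eqref{four-norm} while that of $H_{N+1,r}$ is the corresponding $M_{\ell+r}$-term, and $\beta_N(r)$ is their difference. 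The only analytic input here is the identity, immediate from the integral representation in \eqref{Mdef}, that $M_p(n;y)=(-n)^{p-1}\bigl(\Gf(1-p,-ny)-\Gf(1-p,-n)\bigr)$ for $n<0$: taking $p=\ell+r$ and $n=4\pi\bigl(\tfrac r{12}-N\bigr)$, the incomplete-gamma-at-$y$ pieces cancel and what survives is $\bigl(4\pi(N-\tfrac r{12})\bigr)^{\ell+r-1}\Gf\bigl(1-\ell-r,4\pi(N-\tfrac r{12})\bigr)$ times the relevant Fourier coefficient of $\bar\eta^{-2r}F$ — exactly the scalar multiplying $j_{\ell,N,r}$ in \eqref{Nstep}.

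The main obstacle is the bookkeeping in the middle two paragraphs: one must track carefully which of the three pieces of \eqref{four-norm} — the incomplete-gamma piece for $\nu\le-\max(N,\mu_\infty)$, the $M_{\ell+r}$-piece for $1-N\le\nu\le-\mu_\infty$, and the free holomorphic piece for $\nu\ge 1-m_\ell$ — a given Fourier index $\nu=-a$ falls into for \emph{both} $N$ and $N+1$, allowing for the overlapping and empty ranges that \eqref{four-norm} admits, and in particular verify that the normalization kills every holomorphic contribution on the window $1-N\le\nu\le-m_\ell$ common to both, so that no spurious $j_{\ell,a,r}$ with $a<N$ survives in $D_r$. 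Once this is organized, the remainder is the one-line $M_p$–incomplete-gamma identity together with the definition of $j_{\ell,N,r}$.
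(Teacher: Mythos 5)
Your argument is correct and is essentially the paper's own proof: both identify $H_{N+1,r}-H_{N,r}$ as a weakly holomorphic modular form, compare the normalized Fourier expansions \eqref{four-norm} term by term so that only the index $\nu=-N$ survives, and evaluate that term via the integral representation \eqref{Mdef} relating $M_{\ell+r}$ to the incomplete gamma function. Your extra care about regularity at the points of $S$ and the expansion in the basis $j_{\ell,a,r}$ only makes explicit what the paper's phrase ``holomorphic family of holomorphic modular forms'' compresses.
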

\begin{proof}The difference
\begin{align*}
&H_{N+1,r}(z) - H_{N,r}(z) \= \sum_{\nu \geq 1-m_\ell} \bigl(
b^\infty_{N+1,\nu}(r) - b^\infty_{N,\nu}(r)\bigr)\, q^{\nu+r/12}\\
&\hbox{} + \begin{cases}
a_{-N}^\infty(r) \, q^{-N+r/12}\,\Bigl(M_{\ell+r}\bigl(
4\pi\bigl(-N+\tfrac r{12}\bigr);y\bigr) &\\
\quad\hbox{}- \bigl( -4\pi\bigl( -N+\tfrac r{12}\bigr)
\bigr)^{\ell+r-1}\, \Gf\bigl(1-\ell-r,-4\pi\bigl(-N+\tfrac
r{12}\bigr)y\bigr)
\Bigr) &\text{ if }N\geq\mu_\infty\,,
\\
0&\text{ if }N<\mu_\infty
\end{cases}
\end{align*}
is a holomorphic family on $V_{0,N}$ of holomorphic modular forms. If
$N<m_\ell$ or if $N<\mu_\infty$, then it has non-zero Fourier terms
only of order $\nu\geq 1-m_\ell$, hence it vanishes. If
$N\geq m_\ell$ and $N\geq \mu_\infty$, then a computation shows that
the starting term in the Fourier expansion is equal to
\[- \Bigl( -4\pi\bigl( -N+\tfrac r{12}\bigr)
\Bigr)^{\ell+r-1}\, \Gf\Bigl(1-\ell-r,-4\pi\bigl(-N+\tfrac
r{12}\bigr)\Bigr) \, a_{-N}^\infty(r) \, q^{-N+r/12}\,, \]
and the other terms have order $\nu \geq 1-m_\ell$. So
$H_{N+1,r}-H_{N,r}$ is equal to the multiple of $j_{\ell,N,r}$
indicated in the lemma.

\end{proof}
\begin{lem}\label{lem-ext}The family $r\mapsto H_{N,r}$ in
Proposition~\ref{prop-hol-norm} extends as a holomorphic family on
$\CC\setminus[12M,\infty)$, where $M=\max(N,m_\ell,\mu_\infty)$.
\end{lem}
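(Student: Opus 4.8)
The plan is to glue together the normalized families $r\mapsto H_{N',r}$ of Proposition~\ref{prop-hol-norm} for $N'\geq N$, using Lemma~\ref{lem-diff} to absorb the change of normalization and observing that every correction term that occurs is holomorphic off a half-line $[12k,\infty)$ with $k\geq M$, hence off $[12M,\infty)$.

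First I would fix $r_0\in\CC\setminus[12M,\infty)$ and, using Lemma~\ref{lem-V0N}, choose an integer $N'\geq N$ with $r_0\in V_{0,N'}$. On the disk $V_{0,N}$ one has the telescoping identity
\[ H_{N,r}\= H_{N',r}\;-\;\sum_{k=N}^{N'-1}\bigl(H_{k+1,r}-H_{k,r}\bigr),\]
the $k$-th summand being read on $V_{0,k}\supseteq V_{0,N}$. When $N\leq k<M$, the inequality $k\geq N$ forces $M=\max(m_\ell,\mu_\infty)$, so $k<\max(m_\ell,\mu_\infty)$ and Lemma~\ref{lem-diff} gives $H_{k+1,r}=H_{k,r}$; when $k\geq M$ the same lemma gives the explicit correction of~\eqref{Nstep} (with $k$ in place of $N$), namely the holomorphic modular form $j_{\ell,k,r}$ of~\eqref{jellradef} times
\[ c_k(r)\;:=\;-\,a_{-k}^\infty(r)\,\bigl(4\pi(k-\tfrac r{12})\bigr)^{\ell+r-1}\,\Gf\bigl(1-\ell-r,\,4\pi(k-\tfrac r{12})\bigr).\]
Hence on $V_{0,N}$ the family $r\mapsto H_{N,r}$ coincides with $r\mapsto H_{N',r}-\sum_{M\leq k<N'}c_k(r)\,j_{\ell,k,r}$, and this is the proposed extension near $r_0$.

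Next I would verify that this expression is a holomorphic family on the connected set $V_{0,N'}\cap\bigl(\CC\setminus[12M,\infty)\bigr)$. The family $r\mapsto H_{N',r}$ is holomorphic on $V_{0,N'}$ by Proposition~\ref{prop-hol-norm}, each $a_{-k}^\infty$ is entire, and each $r\mapsto j_{\ell,k,r}$ is holomorphic on $\CC$; so it suffices to see that $c_k$ is holomorphic on $\CC\setminus[12k,\infty)$ for $k\geq M$. With the paper's branch conventions, $w\mapsto w^{\ell+r-1}$ and $w\mapsto\Gf(1-\ell-r,w)$ are holomorphic on $\CC\setminus(-\infty,0]$, and the affine bijection $r\mapsto w=4\pi(k-\tfrac r{12})$ carries $\CC\setminus[12k,\infty)$ onto $\CC\setminus(-\infty,0]$; since $[12k,\infty)\subseteq[12M,\infty)$ for $k\geq M$, each such $c_k$ is holomorphic on $\CC\setminus[12M,\infty)$. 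I would also note $V_{0,N}\subseteq\{|\re r|<12N\}\subseteq\{|\re r|<12M\}$ by~\eqref{V0}, so $V_{0,N}$ itself lies in $\CC\setminus[12M,\infty)$ and the proposed extension restricts to $H_{N,r}$ there.

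Finally I would patch the local extensions obtained for various $N'$. For $N'<N''$, on $V_{0,N'}$ the two expressions differ by $\bigl(H_{N'',r}-H_{N',r}\bigr)-\sum_{N'\leq k<N''}\bigl(H_{k+1,r}-H_{k,r}\bigr)=0$, and since the finitely many $c_k$ involved continue holomorphically to $\CC\setminus[12M,\infty)$, this identity holds throughout $V_{0,N'}\cap\bigl(\CC\setminus[12M,\infty)\bigr)$; hence the local extensions agree on overlaps. Their common extension lives on $\bigcup_{N'\geq N}V_{0,N'}\cap\bigl(\CC\setminus[12M,\infty)\bigr)=\CC\setminus[12M,\infty)$, using $\bigcup_{N'}V_{0,N'}=\CC$ from Lemma~\ref{lem-V0N}. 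It is a holomorphic family of $(\ell+r)$-harmonic modular forms in $H^\mer_{\ell+r}(r)$, because each local representative is such a form plus holomorphic modular forms $j_{\ell,k,r}$; applying $\x_{\ell+r}$ annihilates the $j_{\ell,k,r}$, so $\x_{\ell+r}$ of the extension is still $\bar\eta^{-2r}F$; and its Fourier terms near any point are the explicit expressions of~\eqref{four-norm} adjusted by the holomorphic Fourier coefficients of the $j_{\ell,k,r}$, hence holomorphic. The step I expect to cost the most effort is the middle one: making sure no correction with $k<M$ contributes and that the transcendental factor in $c_k$, whose only possible singularity lies at $r=12k$, is holomorphic on $\CC\setminus[12k,\infty)$ — it is precisely the union of these cuts over $k\geq M$ that pins down the domain $\CC\setminus[12M,\infty)$.
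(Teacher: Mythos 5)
Your proposal is correct and follows essentially the same route as the paper: the paper also iterates Lemma~\ref{lem-diff}, noting that the only obstruction to continuation is the factor $w^{\ell+r-1}\,\Gf(1-\ell-r,w)$ with $w=4\pi(k-\tfrac r{12})$, which is single-valued holomorphic precisely on $\CC\setminus(-\infty,0]$, i.e.\ for $r\in\CC\setminus[12k,\infty)$. Your write-up merely makes the paper's ``applying this successively'' explicit via the telescoping sum and the patching over the exhaustion $\bigcup_{N'}V_{0,N'}=\CC$.
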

\begin{proof}All $H_{N,r}$ with $N<\max(m_\ell,\mu_\infty)$ have a
holomorphic extension to $V_{0,N_1}$, with
$N_1=\max(m_\ell,\mu_\infty)$. The function
$w\mapsto w^{\ell+r-1}\,\Gf(1-\nobreak \ell-\nobreak r,w)$ can be
extended as a single-valued holomorphic function on
$\CC\setminus(-\infty,0]$, but not further for general values of
$\ell+r$. Lemma~\ref{lem-diff} implies that for $N\geq N_1$ the
family $H_{N,r}$ has a holomorphic extension to
$V_{0,N+1}\setminus [12 N,\infty)$. Applying this successively, we
get for $N\geq N_1$ the holomorphic extension of $H_{N,r}$
to~$\CC\setminus[12N,\infty)$.
\end{proof}

\begin{thm}\label{thm-main}Let $F$ be an antiholomorphic form in $\bar
M_{2-\ell}^\mer\bigl(\SL_2(\ZZ),v_0\bigr)$ for the trivial multiplier
system $v_0=1$, with weight $\ell\in 2\ZZ$. Let $M\in \ZZ$ be such
that $M>-\mu_\infty$, where
$F(z) = \sum_{\nu\geq \mu_\infty} a_\nu \, \bar q^\nu$ is the Fourier
expansion of $F$ near~$\infty$. Then there is a holomorphic family
$r\mapsto \ha_{M,r}$ on $\CC\setminus [12M,\infty)$ of
$(\ell+\nobreak r)$-harmonic modular forms satisfying $\x_{\ell+r}
\ha_{M,r}=\bar\eta^{-2r}\, F$ for all
$r\in \CC\setminus[12M,\infty)$.

The family $\ha_{M,r}$ can be chosen uniquely by prescribing a Fourier
expansion of the form
\badl{haFe} \ha_{M,r}(z) &\= \sum_{\nu\leq -\max(M,\mu_\infty)}
a_{-\nu}^\infty(r)\, \Bigl(
-4\pi\bigl(\nu+\tfrac r{12}\bigr)\Bigr)^{\ell+r-1}\, \, q^{\nu+r/12}\\
&\qquad\qquad\hbox{} \cdot
\Gf\Bigl(1-\ell-r,-4\pi\bigl(\nu+\tfrac r{12}\bigr)y\Bigr)\\
&\quad\hbox{}+ \sum_{\nu=1-M}^{-\mu_\infty} a_{-\nu}^\infty(r) \,
q^{\nu+r/12}\, M_{\ell+r}\Bigl(4\pi\bigl(\nu+\tfrac r
{12}\bigr);y\Bigr)\\
&\quad\hbox{} + \sum_{\nu\geq 1-m_\ell} b_{M,\nu}^\infty(r)\,
q^{\nu+r/12}\,. \eadl
The function $M_{\ell+r}$ is defined in~\eqref{Mdef}. The coefficients
$a_{-\nu}^\infty$ are holomorphic functions on~$\CC$ occurring in the
Fourier expansion $(\bar\eta^{-2r}\, F)(z) =
\sum_{\nu\geq \mu_\infty}a_\nu^\infty(r) \, \bar q^{\nu-r/12}$. The
quantity $m_\ell\in \ZZ$ is defined by
$\ell+12 m_\ell\in \{0,4,6,8,10,14\}$.
\end{thm}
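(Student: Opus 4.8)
The plan is to deduce Theorem~\ref{thm-main} by assembling the material of \S\ref{sect-ramf}--\S\ref{sect-hfhf}: Proposition~\ref{prop-hol-norm} provides the normalized families $r\mapsto H_{N,r}$ on the disks $V_{0,N}$, and Lemma~\ref{lem-ext} extends each of them to a holomorphic family on $\CC\setminus[12\max(N,m_\ell,\mu_\infty),\infty)$ of $(\ell+r)$-harmonic forms in $H^\mer_{\ell+r}(r)$. A preliminary reduction brings us to the case $\ell\in 2\ZZ_{\leq 0}$ treated there: if $\ell>0$, pick $j\in\ZZ_{\geq 0}$ with $\ell':=\ell-12j\leq 0$ and write $\bar\eta^{-2r}F=\bar\eta^{-2(r+12j)}\,\bar\Dt^{\,j}F$ with $\bar\Dt^{\,j}F\in\bar M^\mer_{2-\ell'}(v_0)$ and cusp order $\mu_\infty+j$; since $\ell+r=\ell'+(r+12j)$ and the substitution $r\mapsto r+12j$ carries $\CC\setminus[12M,\infty)$ onto $\CC\setminus[12(M+j),\infty)$, a family solving the problem for $(\ell',\bar\Dt^{\,j}F,M+j)$ gives one for $(\ell,F,M)$ after undoing the shift, and the normalization \eqref{haFe} is respected because $m_{\ell'}=m_\ell+j$. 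So we may assume $\ell\leq 0$, hence $m_\ell\geq 0$.

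For existence, fix $M>-\mu_\infty$ and split on whether $M>\mu_\infty$. If $M>\mu_\infty$, then $M\geq\max(1,1-\mu_\infty)$ (when $\mu_\infty\leq 0$ this is forced by $M\geq 1-\mu_\infty$, and when $\mu_\infty\geq 1$ by $M>\mu_\infty\geq 1$), so Proposition~\ref{prop-hol-norm} applies with $N=M$; Lemma~\ref{lem-ext} then extends $H_{M,r}$ to a holomorphic family on $\CC\setminus[12\max(M,m_\ell),\infty)\supseteq\CC\setminus[12M,\infty)$, and I take $\ha_{M,r}$ to be its restriction to $\CC\setminus[12M,\infty)$; its Fourier expansion is \eqref{four-norm} with $N=M$, which is \eqref{haFe} verbatim. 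If instead $M\leq\mu_\infty$, then necessarily $\mu_\infty\geq 1$, I apply Proposition~\ref{prop-hol-norm} with $N=\mu_\infty\ (\geq\max(1,1-\mu_\infty))$ and restrict the extension of $H_{\mu_\infty,r}$ to $\CC\setminus[12M,\infty)\subseteq\CC\setminus[12\max(\mu_\infty,m_\ell),\infty)$; here the middle sum of \eqref{four-norm} (with $N=\mu_\infty$) is empty, and so is that of \eqref{haFe} (since $M\leq\mu_\infty$), while the outer two sums agree because $\max(\mu_\infty,\mu_\infty)=\max(M,\mu_\infty)$, so again the expansion has exactly the prescribed form. In both cases $\x_{\ell+r}\ha_{M,r}=\bar\eta^{-2r}F$ holds on the open set $V_{0,N}$ by Proposition~\ref{prop-hol-norm}; since $\CC\setminus[12M,\infty)$ is connected and, for each fixed $z$, both sides are holomorphic in $r$ on that set, the identity holds throughout $\CC\setminus[12M,\infty)$.

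For uniqueness, suppose $\ha_{M,r}$ and $\ha_{M,r}'$ both satisfy the conclusion with a Fourier expansion of the shape \eqref{haFe}. Their difference $B_r$ lies in the kernel of $\x_{\ell+r}$, and since the coefficients $a^\infty_{-\nu}$ together with the incomplete-gamma and $M_{\ell+r}$-terms in \eqref{haFe} are determined by $F$ alone, the Fourier expansion of $B_r$ at $\infty$ starts at order $\nu\geq 1-m_\ell$; the polar parts at the points of $S$ are likewise forced by $F$, so $B_r$ has no poles in $\uhp$ and therefore $B_r\in M^!_{\ell+r}(r)$. By the description of that space in \S\ref{sect-holmf} and \S\ref{sect-frd}, a nonzero element $\eta^{2r-24m_\ell}E_k\,p(J)$ has lowest Fourier order $-m_\ell-\deg p\leq-m_\ell<1-m_\ell$, so $B_r\equiv 0$. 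The reduction to $\ell\leq 0$ and this uniqueness step are routine; the only part that demands genuine care is the two-case bookkeeping above matching the natural domain and the normalized expansion of the chosen $H_{N,r}$ to the prescribed data $(M,\eqref{haFe})$, but the hypothesis $M>-\mu_\infty$ makes it go through, so nothing more stands between us and the theorem than what Theorem~\ref{thm-fam-e}, Proposition~\ref{prop-hol-norm} and Lemma~\ref{lem-ext} already supply.
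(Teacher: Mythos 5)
Your proposal is correct and follows essentially the same route as the paper: both deduce the theorem from Proposition~\ref{prop-hol-norm} and Lemma~\ref{lem-ext} by twisting with a power of $\bar\Dt=\bar\eta^{24}$ and shifting $r$ by a multiple of $12$ to land in the hypotheses $\ell\le 0$, $N\ge\max(1,1-\mu_\infty)$; the paper packages this as a single substitution $F\mapsto \tilde F\bar\Dt^{-p}$ with $N=M-p$, whereas you first normalize $\ell\le 0$ and then choose $N\in\{M,\mu_\infty\}$ by a case split. Your added analytic-continuation step for the identity $\x_{\ell+r}\ha_{M,r}=\bar\eta^{-2r}F$ on the extended domain and the explicit uniqueness argument (difference lies in $M^!_{\ell+r}(r)$ with Fourier order $\ge 1-m_\ell$, hence vanishes) are sound and merely make explicit what the paper leaves implicit in \S\ref{sect-frd}.
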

The holomorphic functions $b_{M,\nu}$ on $\CC\setminus[12M,\infty)$
are not known explicitly. The middle sum in~\eqref{haFe} may be
empty. We note that $a_{\mu_\infty}^\infty(r) = a_{\mu_\infty}$,
which we can assume to be non-zero. Then the top term in the central
sum is non-zero.
\begin{proof}
We denote by $\tilde F\in \bar M_{2-\tilde\ell}^\mer(v_0)$ the
antiholomorphic modular form in the theorem, and will apply the
earlier result to
\[ F \= \tilde F\, \bar\Dt^{-p}\,\in\, \bar M^\mer_{2-\ell}(v_0)\,,\]
with $p\in \ZZ$ not yet fixed. Denoting the quantities related
to~$\tilde F$ by a tilde we have
\begin{align*}
\ell&\= \tilde\ell+12p\,,&\qquad m_\ell&\= m_{\tilde \ell}-p\,,\\
\mu_\infty &\=\tilde\mu_\infty-p,\,,& a_\nu(r) &\= \tilde
a_{\nu+p}(r+12p)\,.
\end{align*}
We take $N=M-p$ and choose
\[ p \leq \min\Bigl(
-\tfrac{\tilde\ell}{12},M-1,\tfrac{M-1+\tilde\mu_\infty}2\Bigr)\,.\]
Then $\ell\leq 0$ and $N\geq \max(1,1-\nobreak\mu_\infty)$. We apply
Proposition~\ref{prop-hol-norm} and Lemma~\ref{lem-ext} to $F$, and
take $\ha_{M,r}= H_{N,r-12p}$. Then $\x_{\tilde \ell+r} \ha_{M,r} \=
\x_{\ell+r-12p} H_{N,r-12p} = \bar\eta^{-2r}\,\tilde F$ for $r\in
\CC\setminus[12M,\infty)$.
\end{proof}

\subsection{Remarks}
\subsubsection{Comparison: use of Poincar\'e series and use of
perturbation theory}The existence of a harmonic lift of a single
antiholomorphic modular form can be proved with Poincar\'e series in
the case of a real weight and a unitary multiplier system. If the
weight is larger than~$2$ the Poincar\'e series converge absolutely,
and the construction gives an explicit expression for the lift (with
Kloosterman sums and Bessel functions). Outside the region of
absolute convergence analytic extension is needed anyhow, and
requires a careful analysis of the properties of this continuation.
The approach in \S\ref{sect-ramf} uses a more general result, and
works generally. I do not know another method that allows the
handling of complex weights.

\subsubsection{Use of Hodge theory}The approach in \S3 of~\cite{BrFu}
is not restricted to the case of
$\bar M^!_{2-p}\bigl(\SL_2(\ZZ),v_1\bigr)$. Jan Bruinier remarks that
singularities at points of $\uhp$ can be accommodated in the divisor
$D$ used in the proof of Theorem~3.7 in~\cite{BrFu}, and that one may
be able to handle multiplier systems $v_r$ with rational values
of~$r$.% Jan Bruinier ok-ed this remark, email March 26, 2012

\subsubsection{Existence only}Theorem~\ref{thm-main} is an existence
result. It gives an overview of harmonic lifts and organizes them in
families. It does not give explicit knowledge of the lifts.

Sometimes we know explicitly a harmonic function by other means, and
may be able to identify it as a member of a family. See
\S\ref{sect-etap} for some examples.

\subsubsection{Generalization}\label{generalization}Theorem~\ref{thm-main}
is stated only for the discrete group $\SL_2(\ZZ)$, since for that
case I have checked the details. I expect that a similar theorem can
be proved for any cofinite discrete subgroup of $\SL_2(\RR)$ with
cusps. For cocompact groups generalization seems much harder.

For cofinite groups $\Gm$ with cusps the group of multiplier systems
is a commutative complex Lie group, with finite dimension; its
dimension is $1$ for $\SL_2(\ZZ)$. The parameter $r$ in this paper is
essentially an element of the Lie algebra of the group of multiplier
systems. The parameter $\ph$ used in~\cite{Br94} can be viewed as
running through the Lie algebra of the group of multiplier systems.
The results in \S\ref{sect-ramf} probably go through with open sets
$V_{0,N}$ in that Lie algebra as parameter space. To transform
meromorphic families into holomorphic families with the method of
\S\ref{sect-hfhf} is probably very hard if the dimension of the
parameter space is larger than~$1$. For that purpose I think it might
be wise to work with one-dimensional subvarieties of the parameter
space.

\subsection{Mock modular forms}\label{sect-mock}
In the Fourier expansion~\eqref{four-norm} of the normalized family
$\ha_{M,r}$ it seems natural to put
\bad \co_{M,r}(z) &\= \sum_{\nu\leq -\max(M,\mu_\infty)}
a_{-\nu}^\infty(r)\, \Bigl(
-4\pi\bigl(\nu+\tfrac r{12}\bigr)\Bigr)^{\ell+r-1}\, q^{\nu+r/12}\\
&\qquad\hbox{} \cdot
\Gf\Bigl(1-\ell-r,-4\pi\bigl(\nu+\tfrac r{12}\bigr)y\Bigr)\\
&\qquad\hbox{}
+ \sum_{\nu=1-M}^{-\mu_\infty} a_{-\nu}^\infty(r) \,
M_{\ell+r}\Bigl(4\pi\bigl(\nu+\tfrac r{12}\bigr);y\Bigr)\,
q^{\nu+r/12} \ead
as the part of the expansion arising from $\bar\eta^{-2r}\, F$, and
the remaining part
\be\label{Mo-def} \mo_{M,r}(z) \= \sum_{\nu \geq 1-m_\ell}
b_{N,\nu}^\infty(r)\, q^{\nu+r/12}\ee
as the corresponding family of \emph{mock modular forms}. This
splitting depends on the choice of the basis vector
$M_{\ell+r}\bigl( \cdot;\cdot)$ in the $(\ell+\nobreak r)$-harmonic
terms with $\nu\geq 1-m_\ell$. Another choice leads to another
splitting.

If the set $S$ of singularities in~$\uhp$ of $F$ is non-empty, the
series in \eqref{Mo-def} for $\mo_{M,r}(z)$ defines a holomorphic
function only on the region $\im z>y_S$ where $y_S$ is the maximum
value om $\im \z$ as $\z$ runs through~$S$. It seems unknown whether
the functions $\mo_{N,r}$ and $\co_{N,r}$ have an analytic extension
to a larger region in~$\uhp$.

The expansion of $\ha_{M,r}$ at $\z$ in the system of representatives
$\PtY$ of $\Gm\backslash S$ gives rise to a splitting $\ha_{M,r}=M+C$
on a pointed neighborhood of $\z$, and seems not to have a relation
to the splitting $\ha_{M,r}=\mo_{M,r}+\co_{M,r}$.

My conclusion is that the concept of mock modular forms is still
unclear in the generality of families of modular forms considered in
this note.

\section{Harmonic lift of eta-powers}\label{sect-etap}
As an example we look at $\bar \eta^{-2r}$ for $r$ near to~$0$
in~$\CC$. Theorem~\ref{thm-main}, with the choice
$1\in \bar M_{2-2}\bigl(\SL_2(\ZZ),v_0\bigr)$ provides us with the
family $r\mapsto \ha_r:=\ha_{1,r}$ on $\CC\setminus[12,\infty)$ of
$(r+\nobreak 2)$-harmonic lifts of~$\bar \eta^{-2r}$. We identify it
with known harmonic lifts for certain values of~$r$.\smallskip

We note that
\be \bar\eta^{-2r} \= \sum_{\nu\geq 0} p_\nu(-r)\, \bar
q^{\nu-r/12}\,, \ee
with polynomials $p_\nu$ of degree~$\nu$ with rational coefficients. A
first order expansion of $e^{2r\log\eta} $ at $r=0$ shows, with use
of~\eqref{logeta}
\be p_0=1\,,\quad\text{ and for }\nu\geq 1:\quad p_\nu(0)=0\text{ and
}p_\nu'(0)=-2\, \s_{-1}(\nu)\,.\ee

Theorem~\ref{thm-main} gives the following Fourier expansion:
\bad \ha_r(z) &\= \sum_{\nu\leq -1} p_{-\nu}(-r) \,
\Bigl(-4\pi\bigl(\nu+\tfrac r{12}\bigr)\Bigr)^{r+1}\, q^{\nu+r/12}\,
\Gf\Bigl( -1-r,-4\pi\bigl(\nu+\tfrac r{12}\bigr)y\Bigr)
\\
&\qquad\hbox{}
+ \Bigl( M_{2+r}\bigl( \tfrac{\pi r}3;y\bigr) + b_0(r) \Bigr)\,
q^{r/12}
+ \sum_{\nu\geq 1} b_\nu(r) \, q^{\nu+r/12}\,. \ead
The holomorphic functions $b_\nu$ on $\CC\setminus[12,\infty)$ are
unknown.

For three values of $r$ we mention constructions of
$(r+\nobreak 2)$-harmonic modular lifts of $\bar\eta^{-2r}$. If
$r\in (-12,12)$ the sole term of $\ha_r$ that q is not exponentially
decreasing at~$\infty$ is the term of order $\frac r{12}$. If the
other lift also has this term as the only non-decreasing one, that
lift coincides with~$\ha_r$.
\medskip\par\noindent
\emph{Lift of $1$. }A well known $2$-harmonic lift of $1=\bar\eta^0$
is the non-holomorphic Eisenstein series
\be E_2^{\mathrm{nh}}(z)= y^{-1}-\frac \pi 3 + 8\pi \sum_{\nu \geq
1}\s_1(\nu)\, q^\nu \,.\ee

We have $M_2(0;y)= y^{-1}-1$. The terms with $q^\nu$, $\nu\geq 1$ are
quickly decreasing. We conclude that $\ha_0=E_2^{\mathrm{nh}}(z)$,
and find
\be b_0(0)\=1-\frac \pi3\,,\qquad b_\nu(0)\= 8\pi\,
\s_1(\nu)\quad\text{for }\nu\geq 1\,. \ee
So we have identified the value of the family at $r=0$ with a known
$2$-harmonic modular form.

In this case we can proceed a bit further. The computation used
in~\S6.4 of~\cite{BrDi} to produce an explicit example of a second
order Maass form can be modified to get information on the derivative
$\frac d{dr}\ha_r\bigr|_{r=0}$. In this way one can arrive at the
following result:
\be b_\nu'(0) \=
\begin{cases} -16\pi \sum_{\mu=1}^{\nu-1}\s_{-1}(\mu)\,\s_1(\nu-\mu)
-8\pi \sum_{d|\nu}\frac \nu d\log\frac {d^2}\nu
\\\quad\hbox{}
- 8\pi\,\bigl(1+\gm-\log 4\pi\bigr)\, \s_1(\nu) +\frac{2\pi}3\,
\s_{-1}(\nu)\,,&\text{ if }\nu\geq 1\,,\smallskip\\
-1+\frac\pi 3\bigl(2\gm-\log 4\bigr)-\frac 4\pi\z'(2)&\text{ if
}\nu=0\,,
\end{cases}
\ee
where $\gm$ denotes Euler's constant.
%\texttt{http://www.staff.science.uu.nl/\~{}brugg103/notes/eisfam.pdf}}

\medskip\par\noindent
\emph{Lift of $\eta^3$. }This case can be related to an example of a
mock modular form in~\cite{Zw02}. The unary theta function $g_{a,b}$
in Proposition~1.15 in~\cite{Zw02} with $a=b=\frac12$ gives
$g_{1/2,1/2}=\eta^3$. (This follows from the transformation behavior
of $g_{a,b}$ and inspection of its Fourier expansion.) The completed
Lerch sum $\tilde \mu$ in Theorem~1.11 in~\cite{Zw02} gives a
$\frac12$-harmonic lift
\be z\mapsto \frac{\sqrt 2}{3i}\, \Bigl(\tilde \mu\bigl(
\tfrac12,\tfrac12;z)+ \tilde \mu \bigl(\tfrac z2,\tfrac z2;z) +
\tilde \mu\bigl( \tfrac{z+1}2,\tfrac{z+1}2;z\bigr)\Bigr)\ee
of $\bar \eta^3$. Since the term with $q^{-1/8}$ is the sole
increasing term (as $y\rightarrow\infty)$, this lift is equal to
$\ha_{-3/2}(z)$.
\medskip\par\noindent
\emph{Lift of $\eta^4$. }The fourth power $\eta^4$ spans the space of
holomorphic cusp forms for the commutator subgroup
$\Gcom=[\SL_2(\ZZ),\SL_2(\ZZ)]$.

The holomorphic function $H$ on~$\uhp$ given by
\be H(\tau) \= -2\pi i\int_\infty^z \eta^4(\tau)\, d\tau \ee
has the transformation behavior $H(\gm \tau) = H(\tau) + \ld(\gm)$ for
some group homomorphism $\ld:\Gcom\rightarrow\CC$. The function
$C(z) = \frac{-1}{4\pi}\,\bar H$ satisfies $\x_0\, C = \bar \eta^4$.
In \S4.3.1 in~\cite{BrDi} we see that there can be found a linear
combination $M$ of the holomorphic functions $H$ and
$z\mapsto \z\bigl(H(z) \bigr)$, where $\z$ is the Weierstrass
zeta-function for an appropriate lattice, such that $M+C$ is a
$\Gcom$-invariant harmonic lift of $\bar \eta^4$. The average
$\sum_{n\bmod 6} e^{\pi i n/3}\,(M+\nobreak C) |_0T^n$ has the
desired transformation behavior under $\SL_2(\ZZ)$. Inspection of the
growth of the Fourier terms of $M(z) + C(z)$ as
$\im z\rightarrow\infty$ shows that $M+C$ is equal to~$\ha_{-4}$.

%%%%%%%%%%%%%%%%%% references %%%%%%%%%%%%%%%%%%%%%%%%%%%%%%

\ifcitenumber
\newcommand\bibit[4]{
\bibitem {#1}#2: {\em #3;\/ } #4}
\else
\newcommand\bibit[4]{
\bibitem[#1] {#1}#2: {\em #3;\/ } #4}
\fi
% 1: label
% 2: author(s)
% 3: title
% 4: remaining part of reference
\raggedright

\end{document}